\newcommand{\Ad}{\ensuremath{\mathrm{Ad}}}
\newcommand{\g}{\ensuremath{\mathfrak g}}
\newcommand{\gl}{\ensuremath{\mathfrak gl}}
\newcommand{\go}{\ensuremath{{\mathfrak g}_0}}
\newcommand{\ind}{\ensuremath{\mathrm{ind}}}
\newcommand{\kk}{\ensuremath{\mathfrak k}}
\newcommand{\kko}{\ensuremath{{\mathfrak k}_0}}
\newcommand{\p}{\ensuremath{\mathfrak p}}
\newcommand{\po}{\ensuremath{{\mathfrak p}_0}}
\newcommand{\so}{\ensuremath{\mathfrak so}}
\newcommand{\U}{\ensuremath{\mathfrak U}}
\newcommand{\C}{\ensuremath{\mathbb C}}
\newcommand{\R}{\ensuremath{\mathbb R}}
\newcommand{\N}{\ensuremath{\mathbb N}}
\newcommand{\Z}{\ensuremath{\mathbb Z}}
\newcommand{\vr}{\vspace{2mm}}
\newtheorem{definition}{Definition}
\newtheorem{theorem}{Theorem}
\newtheorem{proposition}{Proposition}
\newtheorem{lemma}{Lemma}
\newcommand{\ds}{\displaystyle}
\begin{document}

\begin{center}
  {\bf  \Large On unitary representations of disconnected real reductive groups}
\end{center}
\vspace{5mm}

\begin{center}

D. Kova\v{c}evi\'{c}{\footnote{e-mail:domagoj.kovacevic@fer.hr}}\\
  University of Zagreb, Faculty of Electrical Engineering and Computing,\\
  Unska 3, HR-10000 Zagreb, Croatia

\end{center}
\setcounter{page}{1}
\vspace{7mm}

\begin{abstract}
  Let $G$ be the real reductive group and let $G_0$ be the identity component.
  Let us assume that the unitary dual $\widehat{G_0}$ is known. In this
  paper (in Section \ref{ind}) the unitary dual $\widehat{G}$ is constructed.
  Automorphisms of $G_0$ generated by elements of $G$ are the main
  ingredient of the construction. If the automorphism is outer, one has to
  consider the corresponding intertwining operators $S$.
  Operators $S$ and their properties are analyzed in Section \ref{s}.
  Automorphisms of \go\ are closely related to automorphisms of $G_0$.
  They are investigated in Section \ref{aut}.
  Automorphisms of \so(4,4) are analyzed in Subsection \ref{so44}.
\end{abstract}

\section{Introduction}

Let $G$ be a real reductive group. The definition of the real reductive group
will be introduced in the Section \ref{not}. Our definition is more general
then the usual one. Namely, we do not require that Ad($x$) is inner
automorphism of \g\ for all $x\in G$.

The main goal of this paper is to describe the unitary dual of the group $G$
in terms of the unitary dual of the identity component $G_0$ of $G$.
The natural choice for this construction is the induction from  $G_0$ to $G$.
Since $G_0$ is a normal subgroup of $G$, every element of $G$ produces
an automorphism of $G_0$. These automorphisms also play important role
in our construction.

In Section \ref{aut}, we analyze automorphisms of \go.
Namely, there is isomorphism from Aut $G_0$ onto the subgroup of Aut \go,
the group of the automorphisms of \go. This subgroup is equal to Aut \go,
if $G_0$ is simply connected. For more details, see \cite{bo}. Hence, the
group Aut~\go, is close to the group Aut~$G_0$. The most complicated situation
(for simple Lie algebras \go) appears if \go\ is equal to \so(4,4)\ when Aut
\go\ is equivalent to $S_4$ and it is analyzed in Subsection \ref{so44}.
These results are already known but it is hard to find a reference.
In particular, we believe that this approach can not be found elsewhere.

Let $(\pi,V)$ be the representation of $G_0$ and $x$ representative of some
component of $G$. Then we consider the representation $\pi^x$ defined by
$\pi^x(g)=\pi(x^{-1}gx)$. If $\pi^x$ is equivalent to $\pi$ then the
corresponding intertwining operator is denoted by $S_x$. The subgroup
of $G$ containing all such elements $x$ is denoted by $G_2$. Operators
$S_x$ naturally lead to coefficients $\beta(x,y)$ which give a better
understanding of the structure of $G_2$.
Operators $S_x$, coefficients $\beta(x,y)$ and some important properties are
described in Section \ref{s}.

In Section \ref{ind} we analyze the induction procedure that goes from
$G_0$ to various subgroups of $G$. Actually, there is a sequence of
subgroups $G_0\leq G_1\leq G_2\leq G$ where $G_1$ is a subgroup of $G$
such that conjugation by any $x\in G_1$ is equal to conjugation by some
$g\in G_0$. In Subsection \ref{ind01}, we analyze the induction step from
$G_0$ to $G_1$ and get reducibility in terms of some finite subgroups.
This result is mentioned in \cite{sc} for discrete series and  the request on
the Lie group is more restrictive (see also the discussion in the Section 9 of
\cite{sc}).
The next step is to analyze induction that goes from $G_1$ to $G_2$.
However, it is easier to consider the induction from $G_0$
to $G_2$. This time reducibility is described in terms of abstract finite
groups and it is derived in Subsection \ref{ind02}. For this induction
step we need intertwining operators $S_x$ and some properties of coefficients
$\beta(x,y)$. One could say that Theorem~\ref{gogi} is a special case of Theorem
\ref{gogii}. However, the statement of Theorem \ref{gogi} is more natural.
Finally, in Subsection \ref{ind03}, is shown that
$\ind_{G_2}^G\pi$ is irreducible if $\pi$ is irreducible. It is well known
result (see \cite{du}), but we add it in order to complete our picture.
Our induction procedure is written in a different way (for example,
see \cite{fh}), but this terminology is more convenient for calculations.
We also add several examples in order to demonstrate our results.
These results are also well known, but they describe our theorems and
proofs.

The main idea of this paper is to use representations of the group which is
slightly bigger (but still finite) then a quotient group $G/G_0$
in order to describe induced representation $\ind_{G_0}^{G_2}V$
in terms of finite groups.
Similar ideas can be found in \cite{ma}. Coefficients $\beta$
(Definition \ref{beta}) and the subgroup $F_2$ (Subsection \ref{ind02})
correspond to the group extension mentioned in \cite{ma}.
Representation $\eta_m$ of $F_0$ corresponds to projective representations
from \cite{ma}.
However, this paper is oriented to unitary representations of the disconnect
group $G$ and the description is very precise (Theorems \ref{gogi} and
\ref{gogii}). We also analyze coefficients $\beta$ and show that
$\beta^n=1$ where $n=|G/G_0|$ (Proposition \ref{betaroot}).
Finally our constructions are very explicit and can be easily performed for
small $n$.
Paper \cite{ma} is much more general and can not be used to derive (easily)
results of this paper. It is oriented to "Mackey theory".
In order to understand the flavor of that paper, it is enough to analyze
theorems of Section 8 and examples in Section 9.

The obtained results can be also used to analyze multiplicities of irreducible
components obtained by parabolic induction
(for a different approach, see \cite{ga}).
There is an example of group $G$ for which multiplicities are 2 (see \cite{ga}
or \cite{sv}).
The identity component $G_0$ has the form
$(SL(2,\R)\times SL(2,\R))/{\pm(I\times I)}$ and $G/G_0\cong\Z_2\times Z_2$.
This example is analyzed in Subsection \ref{ind02}.
Let us denote by $\rho$ the irreducible representation of $P_0=P\cap G_0$
such that $\ind_{P_0}^{G_0}\rho=\pi_1\oplus\pi_2\oplus\pi_3\oplus\pi_4$.
Let us denote by $x$ and $y$ elements of the form $x=\mathrm{diag}(1,-1,1,1)$
and $y=\mathrm{diag}(1,1,1,-1)$. Let us multiply $x$ by $i$ and $y$ by $j$
in order to obtain $xy=-yx$. Now, $\rho^x\cong\rho$ and $\rho^y\cong\rho$,
but $\left(\ind_{P_0}^{P_0\cup xP_0}\rho\right)^y\not\cong
\ind_{P_0}^{P_0\cup xP_0}\rho$. Hence, $\ind_{P_0}^P\rho=\tau\oplus\tau$.
Also, we could use Theorem \ref{gogi} and use that
$\ind_{\Z_2}^Q\eta=\zeta\oplus\zeta$
where the quaternion group $Q=Z(G)$, $\Z_2=Q\cap G_0$ and $\eta$ is a
nontrivial representation.
Since, $\pi_1^x\cong\pi_2$, $\pi_1^y\cong\pi_3$ and $\pi_1^{xy}\cong\pi_4$,
$\ind_{G_0}^G\pi_l=\sigma$ for $l\in\{1,2,3,4\}$.
Now, the induction $\ind_{P_0}^G$ can be described by the following diagram
\begin{equation}\nonumber
  \begin{CD}
    \rho @>\ind_{P_0}^{G_0}>> \pi_1\oplus\pi_2\oplus\pi_3\oplus\pi_4\\
    @VV\ind_{P_0}^PV @VV\ind_{G_0}^GV\\
    \tau\oplus\tau @>\ind_P^G>>\sigma\oplus\sigma\oplus\sigma\oplus\sigma.
  \end{CD}
\end{equation}
It follows that $\ind_P^G\tau=\sigma\oplus\sigma$. The terminology is
explained in Section \ref{s}.

\section{Notation}\label{not}

For us, the real reductive group $G$ will be the Lie group $G$ with a
compact subgroup $K$, an involution $\theta$ and a nondegenerate,
Ad(G) invariant, $\theta$ invariant bilinear form $B$ such that 
\begin{enumerate}
  \item the corresponding Lie algebra \go\ is a reductive Lie algebra,
  \item $\go=\kko\oplus\po$, \kko\ is the Lie algebra of $K$,
  \item \kko\ and \po\ are orthogonal under $B$ and $B$ is positive
    definite on \po\ and negative definite on \kko,
  \item $G=K\times\exp\po$.
\end{enumerate}

The definition is taken from \cite{kn}, but we do not take the last
condition: every automorphism $\Ad(g)$ of $\g=\go\otimes_\R\C$ is inner for \g.
Let $G_0$ and $K_0$ be identity components of $G$ and $K$ respectively.
The quotient $G/G_0\cong K/K_0$ is finite since $K$ is compact.

The representation of $G$ will be denoted by $\pi$ or $(\pi,V)$.
We say that $V$ is a $G$-module and sometimes write $g.v$ or simply $gv$
instead of $\pi(g)v$.
A representation of \g\ on $V$ will be denoted by $\pi$ again.
It naturally extends to a homomorphism $\pi:\U(\g)\rightarrow End(V)$.
We say that $V$ is a \g-module and sometimes write $X.v$ instead of $\pi(X)v$.

Let $(\pi,V)$ be a representation of a real Lie group $G$.
A $(\g,K)$-module $V$ is a \g-module and $K$-module such that the following
conditions are satisfied:
\begin{enumerate}
  \item $k.(X.v)=(Ad(k)X).(k.v)$ for $v\in V_F$ where $V_F$ is the set of
    $K$-finite differentiable vectors in $V$. For more details, see \cite{ba}.
  \item The space $\{k.v\,|\,k\in K\}$ is finite-dimensional for any $v\in V$
    and the action is $C^\infty$.
  \item $\ds \left.\frac{\mathrm{d}}{\mathrm{d}t}\right|_{t=0}
    (\mathrm{exp}\,tX).v=X.v$ for $v\in V_F$.
\end{enumerate}
The first condition can be written in the form
\begin{equation}\label{b4}
  X.(k.v)=k.(\Ad(k^{-1})X.v).
\end{equation}
To each irreducible unitary representation there corresponds $(\g,K)$-module
and vice verse. Hence, the study of irreducible unitary representations can
be done via $(\g,K)$-modules. More details can be found in \cite{ba}.

\section{Automorphisms of \go}\label{aut}

We are interested in automorphisms of the Lie group $G$.
These automorphism form a subgroup of automorphisms of the corresponding
Lie algebra \go. If $G_0$ is simply connected then the group of automorphisms
of $G_0$ is equal to the group of automorphisms of \go\ (see \cite{bo}).
Hence, we will consider a simpler problem: finding automorphisms of \go.

For the rest of this section we assume that \go\ is simple.
Let $\go=\kko\oplus\po$ be the Cartan decomposition and we fix \kko\ and \po.
Complexification of \kko\ will be denoted by \kk\ and complexification 
of \po\ by \p.

If \go\ is compact then $\mathrm{Aut}_\R\go/\mathrm{Int}\go$ is isomorphic
to the group of automorphisms of the Dynkin diagram of \g. The same is
true for $\mathrm{Aut}_\C\g/\mathrm{Int}\g$. See, for example \cite{kn}.
For the general \go, this group is slightly more complicated.

\begin{theorem}\label{out}
  Outer automorphism $\varphi_0$ of simple Lie algebra \go\ can be obtained as
  follows:
  \begin{enumerate}
    \item [(a)] if \kko\ contains $\R$ as a summand then
      $\varphi_0|_{\R}\equiv-1$
    \item [(b)] if \kko\ contains isomorphic components then $\varphi_0$ mixes
      these components
    \item [(c)] if $\varphi_0|_{\kko}\equiv1$ then $\varphi$ is equal to the Cartan
      involution.
  \end{enumerate}
\end{theorem}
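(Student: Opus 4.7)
The plan is to put $\varphi_0$ into a normal form relative to the Cartan decomposition and then analyze three mutually exhaustive subcases according to the behavior of $\varphi_0|_{\kko}$. First I would normalize: because any two Cartan decompositions of \go\ are conjugate under an inner automorphism, one may replace $\varphi_0$ by its composition with a suitable inner automorphism so that $\varphi_0(\kko)=\kko$. Invariance of $B$ together with $\po=\kko^{\perp}$ then forces $\varphi_0(\po)=\po$, so $\varphi_0$ commutes with the Cartan involution $\theta$. This adjustment preserves the coset of $\varphi_0$ in $\mathrm{Aut}\,\go/\mathrm{Int}\,\go$, so $\varphi_0$ remains outer.

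Case (c) is the cleanest. If $\varphi_0|_{\kko}\equiv 1$, then for $X\in\kko$ and $Y\in\po$ the identity $[X,\varphi_0(Y)]=\varphi_0([X,Y])$ shows that $\varphi_0|_{\po}$ is $\mathrm{ad}(\kko)$-equivariant. For simple \go\ whose complexification is simple, the $\kko$-action on \po\ is irreducible over $\R$, and a real form of Schur's lemma forces $\varphi_0|_{\po}=\lambda\cdot\mathrm{id}$. The bracket identity $[\varphi_0(Y_1),\varphi_0(Y_2)]=\varphi_0([Y_1,Y_2])$, whose right-hand side lies in \kko\ and is fixed by $\varphi_0$, then yields $\lambda^2=1$. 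Outerness rules out $\lambda=+1$, so $\lambda=-1$ and $\varphi_0=\theta$.

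For cases (a) and (b), assume $\varphi_0|_{\kko}$ is non-trivial and split $\kko=\mathfrak{z}(\kko)\oplus\mathfrak{s}$ with $\mathfrak{s}=[\kko,\kko]$ semisimple; both summands are $\varphi_0$-invariant. If $\mathfrak{z}(\kko)=\R T$ is one-dimensional, then $\varphi_0(T)=\lambda T$ for some real $\lambda$. Since $\mathrm{Out}\,\go$ is finite for simple \go, a suitable power of $\varphi_0$ is inner, and after a further inner adjustment one may take $\varphi_0$ itself to be of finite order; then $\lambda$ is a real root of unity, forcing $\lambda\in\{\pm 1\}$. A non-trivial action on the center therefore means $\lambda=-1$, which is (a). Any automorphism of the compact semisimple algebra $\mathfrak{s}$ permutes its simple ideals, and this permutation is non-trivial only when there are mutually isomorphic ideals, which is precisely (b).

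The main obstacle I anticipate is the real-form subtlety in case (c): exactly when \go\ is a complex simple Lie algebra viewed as real, the $\kko$-action on \po\ is not $\R$-irreducible, and the Schur step must be rerun using the complex structure on \po\ coming from the center of \kko. A secondary obstacle — presumably what Subsection \ref{so44} is designed to address — is verifying that (a), (b), (c) jointly exhaust $\mathrm{Out}\,\go$; one must check that outer automorphisms of individual simple ideals of $\mathfrak{s}$ do not produce outer automorphisms of \go\ outside the three listed types.
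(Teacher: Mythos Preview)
Your normalization step and your treatment of cases (b) and (c) match the paper's argument closely: the paper also reduces to $\varphi_0$ preserving \kko\ and \po, and for (c) invokes Schur's lemma on the irreducible \kk-module \p\ to get $\varphi|_{\p}=\lambda\cdot\mathrm{id}$, then $\lambda^2=1$ from the bracket. (The paper complexifies first and uses irreducibility of \p\ over \C\ when $G/K$ is non-Hermitian, then observes $\lambda\in\R$; this sidesteps the real-Schur subtlety you flag.)

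Your case (a), however, diverges from the paper and contains a gap. The step ``after a further inner adjustment one may take $\varphi_0$ itself to be of finite order'' asserts that each coset of $\mathrm{Int}\,\go$ in $\mathrm{Aut}\,\go$ contains a finite-order representative. This is true for real simple \go\ but is a nontrivial structural fact (essentially a splitting of $\mathrm{Aut}\to\mathrm{Out}$ over cyclic subgroups), not something you can assert without proof or citation. Moreover the detour is unnecessary: since any Lie-algebra automorphism preserves the Killing form (or the given $B$), and $B$ is negative definite on \kko, the restriction of $\varphi_0$ to the one-dimensional center $\mathfrak z(\kko)$ is an orthogonal map of a one-dimensional definite space, hence $\lambda\in\{\pm1\}$ immediately.

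The paper's own route for (a) is different and yields a little more. It uses the Hermitian structure: writing $\p=\p^+\oplus\p^-$ and taking $X_0\in\mathfrak z(\kko)$ with $\mathrm{ad}(X_0)|_{\p}=J$, one checks from $[X_0,Y]=iY$ for $Y\in\p^+$ that $\varphi(X_0)=\alpha X_0$ forces either $\alpha=+1$ with $\varphi(\p^\pm)=\p^\pm$, or $\alpha=-1$ with $\varphi(\p^\pm)=\p^\mp$. This ties the sign on the center directly to whether $\varphi$ preserves or swaps the holomorphic and antiholomorphic pieces, which is exactly the picture exploited later for $SL(2,\R)$.
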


\begin{proof}
We can assume that \kko\ and \po\ are fixed under the action of $\varphi_0$.
Hence, $\varphi_0|_{\kko}$ is the automorphism of \kko\ and
$\varphi_0|_{\po}$ is the automorphism of \po.
The complexification of $\varphi_0$ will be denoted by $\varphi$.

If \kko\ contains \R, then $\kko=\kko^1\oplus\R$ for some semisimple algebra
$\kko^1$. In that situation, $G/K$ is Hermitian, there is multiplication-by-$i$
map $J$ (acting on \po\ and \p)
with the property $J^2=-1$ and $\p=\p^+\oplus\p^-$ where
$\p^+$ is the $+i$ eigenspace for $J$ and $p^-$ is the $-i$ eigenspace for $J$.
Also, \R\ is the center of \kko, $\varphi_0$ maps \R\ to \R\ and
there is $X_0\in\R$ such that $[X_0,Y]=JY$ for $Y\in\p$.
(For more details, see \cite{kn}.)
Let us denote the complexification of the center \R\ by \C.
Then $\varphi(X_0)=\alpha X_0$ for $\alpha\in\C$.
Let us write $\varphi(Y)=Z+T$ for $Y,Z\in\p^+$ and $T\in\p^-$. Now,
\begin{equation}\nonumber
  [X_0,Y]=iY.
\end{equation}
The action of $\varphi$ produces
\begin{equation}\nonumber
  [\alpha X_0,Z+T]=i\left(Z+T\right).
\end{equation}
Since the left hand side is equal to $\alpha[X_0,Z+T]=i\alpha Z-i\alpha T$,
either $\alpha=1$ and $T=0$ ($ad(X)$ maps $\p^+$ to $\p^+$ and $\p^-$ to $\p^-$)
or $\alpha=-1$ and $Z=0$ ($ad(X)$ maps $\p^+$ to $\p^-$ and $\p^-$ to $\p^+$).
It shows that $\varphi(X_0)=\pm X_0$.

Now, let us consider the situation when \kko\ does not contains \R\ as a
component. If \kko\ contains isomorphic components, then outer isomorphism can
mix these components. A nice example is given in Subsection \ref{so44}.

It remains to consider the case when \kko\ does not contain neither \R\ nor
isomorphic components. Let us consider an outer automorphisms $\varphi_0$
such that $\varphi|_{\kko}\equiv1$.
Since $G/K$ is not Hermitian, the representation of \kk\ on \p\ is irreducible.
Schur's lemma shows that $\varphi$ is a multiplication by $\lambda\in\C$.
Since $\varphi$ is a complexification of $\varphi_0$, $\lambda\in\R$.
Finally, take $X,Y\in\po$ such that $[X,Y]\neq0$. It shows that $\lambda^2=1$.
\end{proof}

If $G=SL(2,\R)$, then the outer automorphism is obtained as conjugation by
$\mathrm{diag}(1,-1)$. Now, $\kko=\R$ and $\varphi_0|_{\R}\equiv-1$.
If $G=SL(n,\R)$, $n>2$, the outer automorphism is the Cartan involution.

\subsection{Automorphisms of \so(4,4)}\label{so44}

Let us recall that
\begin{equation}\label{c4}
  \so(4,4)=\left\{X\in\gl(8,\R)\,|\,X^*I_{4,4}+I_{4,4}X=0\right\}=
  \left[\begin{array}{cc}A&B\\B^*&C\end{array}\right]
\end{equation}
where $A$, $B$ and $C$ are 4-by-4 real matrices and $A$ and $C$ are
skew-symmetric.
Then $\kko=\so(3)\times\so(3)\times\so(3)\times\so(3)$.
Let us write $\kko=\prod_{i=1}^4\so(3)_i$. Generators of $\so(3)_1$ and
$\so(3)_2$ are in $A$ (see (\ref{c4})) and given by
\begin{equation}\nonumber
  X_1=\left[\begin{array}{cccc}
    0&1&0&0\\-1&0&0&0\\0&0&0&-1\\0&0&1&0
  \end{array}\right]\hfill
  Y_1=\left[\begin{array}{cccc}
      0&0&1&0\\0&0&0&1\\-1&0&0&0\\0&-1&0&0
    \end{array}\right]\hfill
  Z_1=\left[\begin{array}{cccc}
      0&0&0&1\\0&0&-1&0\\0&1&0&0\\-1&0&0&0
    \end{array}\right]
\end{equation}
and
\begin{equation}\nonumber
  X_2=\left[\begin{array}{cccc}
    0&1&0&0\\-1&0&0&0\\0&0&0&1\\0&0&-1&0
  \end{array}\right]\hfill
  Y_2=\left[\begin{array}{cccc}
      0&0&-1&0\\0&0&0&1\\1&0&0&0\\0&-1&0&0
    \end{array}\right]\hfill
  Z_2=\left[\begin{array}{cccc}
      0&0&0&1\\0&0&1&0\\0&-1&0&0\\-1&0&0&0
    \end{array}\right]
\end{equation}
Generators of $\so(3)_3$ and $\so(3)_4$ have the same form as generators of
$\so(3)_1$ and $\so(3)_2$ respectively, but they are in $C$.
Elements of \po\ are in $B$
\begin{theorem}
  The group of outer automorphisms of \so(4,4) is $S_4$.
\end{theorem}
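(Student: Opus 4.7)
The plan is to combine Theorem~\ref{out}, which bounds $\mathrm{Out}(\so(4,4))$ from above, with an explicit construction realizing the full symmetric group as outer automorphisms. Since $\kko=\bigoplus_{i=1}^{4}\so(3)_i$ is semisimple, case (a) of Theorem~\ref{out} is vacuous, so every outer automorphism either permutes the four $\so(3)_i$ (case (b)) or is the Cartan involution $\theta$ (case (c)). The involution $\theta$ is actually inner: one checks directly that $\theta=\Ad(I_{4,4})$ for $I_{4,4}=\mathrm{diag}(I_4,-I_4)$, and since $\det I_{4,4}=1$ with orientation preserved on both definite subspaces, $I_{4,4}$ lies in the identity component of $SO(4,4)$. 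Hence case (c) contributes nothing and $\mathrm{Out}(\so(4,4))$ injects into $S_4$ via its action on the set $\{\so(3)_1,\ldots,\so(3)_4\}$.

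Next I would realize three explicit generators of $S_4$. Conjugation by $\mathrm{diag}(-1,1,1,1,1,1,1,1)\in O(4,4)\setminus SO(4,4)$ acts on the first $\so(4)$ block by an orientation-reversing transformation of $\R^4$ and swaps its two $\mathfrak{su}(2)$-summands; evaluated on the generators $X_i,Y_i,Z_i$ this works out to the transposition $(1\,2)$. Analogously, $\mathrm{diag}(1,1,1,1,-1,1,1,1)$ yields $(3\,4)$. For a transposition across the two $\so(4)$ blocks, take the block-swap $P=\left[\begin{array}{cc}0&I_4\\I_4&0\end{array}\right]$; although $P$ rescales the bilinear form by $-1$ and so does not lie in $O(4,4)$, it still normalizes $\so(4,4)$ (the defining relation is insensitive to the overall sign of the form), and conjugation by $P$ carries $\left[\begin{array}{cc}A&B\\B^{*}&C\end{array}\right]$ to $\left[\begin{array}{cc}C&B^{*}\\B&A\end{array}\right]$, implementing the double transposition $(1\,3)(2\,4)$. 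Since $(1\,2),(3\,4),(1\,3)(2\,4)$ generate $S_4$, this realizes the full group inside $\mathrm{Aut}(\so(4,4))$.

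Finally I would verify that distinct permutations give distinct outer classes, thereby establishing injectivity of $\mathrm{Out}(\so(4,4))\to S_4$. Any inner automorphism preserving $\kko$ is of the form $\Ad(k)$ for $k$ in the normalizer $N_{G_0}(\kko)=K_0$, which is connected with Lie algebra $\kko$; its restriction to $\kko$ is therefore an inner automorphism of $\kko$ and preserves each simple ideal. Combined with the surjection from the previous paragraph, this yields $\mathrm{Out}(\so(4,4))=S_4$. The main obstacle will be verifying outerness for the automorphism induced by $P$: since $\det P=1$, one must rule out $\Ad(P)=\Ad(h)$ for some $h\in SO(4,4)_0$. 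Such an equality would force $P=\lambda h$ for some real scalar $\lambda$ (by centralizer considerations), but then $P^{*}I_{4,4}P=\lambda^2 h^{*}I_{4,4}h=\lambda^2 I_{4,4}$, contradicting the direct computation $P^{*}I_{4,4}P=-I_{4,4}$.
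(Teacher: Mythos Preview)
Your proof has a genuine gap: the permutations $(1\,2)$, $(3\,4)$, and $(1\,3)(2\,4)$ do \emph{not} generate $S_4$. Setting $a=(1\,2)$ and $b=(1\,3)(2\,4)$, one has $a^2=b^2=(ab)^4=e$ and $bab^{-1}=(3\,4)$, so the subgroup they generate is the dihedral group $D_4$ of order $8$ (the stabilizer in $S_4$ of the unordered partition $\{\{1,2\},\{3,4\}\}$). Your explicit construction therefore only shows $|\mathrm{Out}(\so(4,4))|\ge 8$, not $24$.

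This cannot be repaired by a cleverer choice of matrices. Any $g\in GL(8,\R)$ whose conjugation preserves $\so(4,4)$ must satisfy $g^{*}I_{4,4}g=\lambda I_{4,4}$ for some real $\lambda\neq 0$ (the standard representation of $\so(4,4)$ on $\R^8$ is irreducible, so the invariant form is unique up to scalar); hence $g$ lies in $\R^{*}\!\cdot O(4,4)$ or in the coset containing your $P$, and modulo $SO_0(4,4)$ and scalars this yields at most eight classes---precisely your $D_4$. Equivalently, a conjugation-by-matrix automorphism preserves the isomorphism class of the vector representation, so it cannot realize the triality-type permutations that mix the vector and half-spin representations. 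The paper deals with this head-on: after producing $(1\,2)$, $(3\,4)$, and $(1\,3)(2\,4)$ exactly as you do, it constructs the $4$-cycle $\varphi_{1234}$ sending $\so(3)_i\mapsto\so(3)_{i+1}$ by writing out its action on a sixteen-element basis of $\po$. This automorphism does not arise from $GL(8,\R)$-conjugation, and together with $(1\,2)$ it generates all of $S_4$. Your upper-bound argument and the innerness of $\theta$ are fine and match the paper; the missing ingredient is exactly this hand-built $4$-cycle.
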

\begin{proof}
The group $S_4$ permutes components of \kko. We have to show that these
automorphisms of \kko\ can be extended to \go. It is enough since \R\ is not a
summand of \kko\ and the conjugation by $\mathrm{diag}(1,1,1,1,-1,-1,-1,-1)$
produces the Cartan involution.

It is easy to produce an automorphism of \go\ that sends $\so(3)_1$ to
$\so(3)_2$, $\so(3)_2$ to $\so(3)_1$ and fixes $\so(3)_3$ and $\so(3)_4$:
$\varphi_{12}(X)=x_{12}Xx_{12}^{-1}$ where
$x_{12}=\mathrm{diag}(1,1,-1,1,1,1,1,1)$. Similarly,
$x_{34}=\mathrm{diag}(1,1,1,1,1,1,-1,1)$ produces an isomorphism $\varphi_{34}$.
The conjugation by
\begin{equation}\nonumber
  \left[\begin{array}{cc}0&I_4\\I_4&0\end{array}\right]
\end{equation}
produces an automorphism that permutes $\so(3)_1$ and $\so(3)_3$ as well as
$\so(3)_2$ and $\so(3)_4$.

It remains to construct the the isomorphism $\varphi_{1234}=\varphi$ which
sends $X_i$, $Y_i$ and $Z_i$ to $X_{i+1}$, $Y_{i+1}$ and $Z_{i+1}$ respectively.
It is enough to write the action of $\varphi$ on components in $B$. So,
$\varphi$ sends
\begin{scriptsize}
  \begin{equation}\nonumber
    \left[\begin{array}{cccc}
      1&0&0&0\\0&1&0&0\\0&0&1&0\\0&0&0&1
    \end{array}\right]\hfill
    \left[\begin{array}{cccc}
      0&1&0&0\\-1&0&0&0\\0&0&0&-1\\0&0&1&0
    \end{array}\right]\hfill
    \left[\begin{array}{cccc}
      0&0&1&0\\0&0&0&1\\-1&0&0&0\\0&-1&0&0
    \end{array}\right]\hfill
    \left[\begin{array}{cccc}
      0&0&0&1\\0&0&-1&0\\0&1&0&0\\-1&0&0&0
    \end{array}\right]\hfill
  \end{equation}
  \begin{equation}\nonumber
    \left[\begin{array}{cccc}
      1&0&0&0\\0&1&0&0\\0&0&-1&0\\0&0&0&-1
    \end{array}\right]\hfill
    \left[\begin{array}{cccc}
      0&1&0&0\\-1&0&0&0\\0&0&0&1\\0&0&-1&0
    \end{array}\right]\hfill
    \left[\begin{array}{cccc}
      0&0&-1&0\\0&0&0&1\\1&0&0&0\\0&-1&0&0
    \end{array}\right]\hfill
    \left[\begin{array}{cccc}
      0&0&0&1\\0&0&1&0\\0&-1&0&0\\-1&0&0&0
    \end{array}\right]\hfill
  \end{equation}
  \begin{equation}\nonumber
    \left[\begin{array}{cccc}
      1&0&0&0\\0&-1&0&0\\0&0&1&0\\0&0&0&-1
    \end{array}\right]\hfill
    \left[\begin{array}{cccc}
      0&1&0&0\\1&0&0&0\\0&0&0&1\\0&0&1&0
    \end{array}\right]\hfill
    \left[\begin{array}{cccc}
      0&0&1&0\\0&0&0&-1\\1&0&0&0\\0&-1&0&0
    \end{array}\right]\hfill
    \left[\begin{array}{cccc}
      0&0&0&1\\0&0&1&0\\0&1&0&0\\1&0&0&0
    \end{array}\right]\hfill
  \end{equation}
  \begin{equation}\nonumber
    \left[\begin{array}{cccc}
      1&0&0&0\\0&-1&0&0\\0&0&-1&0\\0&0&0&1
    \end{array}\right]\hfill
    \left[\begin{array}{cccc}
      0&1&0&0\\1&0&0&0\\0&0&0&-1\\0&0&-1&0
    \end{array}\right]\hfill
    \left[\begin{array}{cccc}
      0&0&1&0\\0&0&0&1\\1&0&0&0\\0&1&0&0
    \end{array}\right]\hfill
    \left[\begin{array}{cccc}
      0&0&0&1\\0&0&-1&0\\0&-1&0&0\\1&0&0&0
    \end{array}\right]\hfill
  \end{equation}
\end{scriptsize}
to
\begin{scriptsize}
  \begin{equation}\nonumber
    \left[\begin{array}{cccc}
      -1&0&0&0\\0&-1&0&0\\0&0&-1&0\\0&0&0&-1
    \end{array}\right]\hfill
    \left[\begin{array}{cccc}
      0&-1&0&0\\1&0&0&0\\0&0&0&-1\\0&0&1&0
    \end{array}\right]\hfill
    \left[\begin{array}{cccc}
      0&0&1&0\\0&0&0&-1\\-1&0&0&0\\0&1&0&0
    \end{array}\right]\hfill
    \left[\begin{array}{cccc}
      0&0&0&-1\\0&0&-1&0\\0&1&0&0\\1&0&0&0
    \end{array}\right]\hfill
  \end{equation}
  \begin{equation}\nonumber
    \left[\begin{array}{cccc}
      1&0&0&0\\0&1&0&0\\0&0&-1&0\\0&0&0&-1
    \end{array}\right]\hfill
    \left[\begin{array}{cccc}
      0&1&0&0\\-1&0&0&0\\0&0&0&-1\\0&0&1&0
    \end{array}\right]\hfill
    \left[\begin{array}{cccc}
      0&0&1&0\\0&0&0&1\\-1&0&0&0\\0&-1&0&0
    \end{array}\right]\hfill
    \left[\begin{array}{cccc}
      0&0&0&1\\0&0&-1&0\\0&1&0&0\\-1&0&0&0
    \end{array}\right]\hfill
  \end{equation}
  \begin{equation}\nonumber
    \left[\begin{array}{cccc}
      1&0&0&0\\0&-1&0&0\\0&0&1&0\\0&0&0&-1
    \end{array}\right]\hfill
    \left[\begin{array}{cccc}
      0&1&0&0\\1&0&0&0\\0&0&0&-1\\0&0&-1&0
    \end{array}\right]\hfill
    \left[\begin{array}{cccc}
      0&0&-1&0\\0&0&0&-1\\-1&0&0&0\\0&-1&0&0
    \end{array}\right]\hfill
    \left[\begin{array}{cccc}
      0&0&0&1\\0&0&-1&0\\0&-1&0&0\\1&0&0&0
    \end{array}\right]\hfill
  \end{equation}
  \begin{equation}\nonumber
    \left[\begin{array}{cccc}
      1&0&0&0\\0&-1&0&0\\0&0&-1&0\\0&0&0&1
    \end{array}\right]\hfill
    \left[\begin{array}{cccc}
      0&1&0&0\\1&0&0&0\\0&0&0&1\\0&0&1&0
    \end{array}\right]\hfill
    \left[\begin{array}{cccc}
      0&0&-1&0\\0&0&0&1\\-1&0&0&0\\0&1&0&0
    \end{array}\right]\hfill
    \left[\begin{array}{cccc}
      0&0&0&1\\0&0&1&0\\0&1&0&0\\1&0&0&0
    \end{array}\right]\hfill
  \end{equation}
\end{scriptsize}
respectively.
\end{proof}

\section{Operators $S$}\label{s}

We analyze two subgroups of $G$: $G_1$ and $G_2$. We start with the
smaller one.

\begin{definition}\label{defg1}
  The subgroup $G_1$ of $G$ is defined by
  \begin{equation}\nonumber
    G_1=\{x\in G\,|\,x^{-1}gx=h^{-1}gh,\;\mbox{for some}\;
    h\in G_0,\;\forall g\in G_0\}.
  \end{equation}
  The center $Z_{G_0}$ of $G_0$ will be denoted by $Z_0$.
  The centralizer $C_G(G_0)$ of $G_0$ in $G_1$ will be denoted by $Z_1$
\end{definition}

It is important to notice that $G_1$ contains connected components which
intersect $C_G(G_0)$. Hence, $Z=C_G(G_0)=C_{G_1}(G_0)$.

\begin{proposition}\label{p1}
  The subgroup $G_1$ is a normal subgroup of $G$.
\end{proposition}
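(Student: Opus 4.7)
The plan is to show that for arbitrary $x\in G_1$ and $y\in G$, the conjugate $yxy^{-1}$ lies in $G_1$, by producing an explicit element of $G_0$ whose conjugation action on $G_0$ coincides with that of $yxy^{-1}$.

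First I would record two standing facts. Since $G_0$ is the identity component of the Lie group $G$, it is automatically a normal subgroup of $G$: any inner automorphism of $G$ is continuous and fixes the identity, hence sends $G_0$ to itself. Consequently, for every $y\in G$ and every $g,h\in G_0$ we have $y^{-1}gy\in G_0$ and $yhy^{-1}\in G_0$. The defining property of $G_1$ (Definition \ref{defg1}) says that for each $x\in G_1$ there exists $h=h(x)\in G_0$ such that $x^{-1}gx=h^{-1}gh$ for every $g\in G_0$.

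Next I would carry out the calculation. Fix $x\in G_1$ with associated $h\in G_0$, fix $y\in G$, and take an arbitrary $g\in G_0$. Setting $g'=y^{-1}gy$, which lies in $G_0$ by normality, I compute
\begin{equation}\nonumber
(yxy^{-1})^{-1}\,g\,(yxy^{-1}) = y(x^{-1}g'x)y^{-1} = y(h^{-1}g'h)y^{-1} = (yhy^{-1})^{-1}\,g\,(yhy^{-1}).
\end{equation}
Let $h'=yhy^{-1}$; again by normality of $G_0$ we have $h'\in G_0$, and the displayed identity holds for every $g\in G_0$. This is exactly the defining condition for $yxy^{-1}$ to belong to $G_1$, so $G_1$ is preserved under conjugation by elements of $G$.

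I do not anticipate a serious obstacle; the only thing to watch is keeping track of inverses when moving $y$ across $x^{-1}gx$, and invoking normality of $G_0$ at the single crucial step where $yhy^{-1}$ is asserted to lie in $G_0$. Nothing from Section \ref{aut} on outer automorphisms or from the later material on $S_x$ is needed here, so the proof should be short and purely formal.
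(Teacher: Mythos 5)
Your proof is correct and follows essentially the same route as the paper: conjugate the defining identity $x^{-1}gx=h^{-1}gh$ by the arbitrary element of $G$ and observe that the conjugate of $h$ still lies in $G_0$ by normality of the identity component. You are somewhat more explicit than the paper about where normality of $G_0$ is invoked, but the argument is the same.
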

\begin{proof}
Let $y\in G_1$, $x\in G$ and $g\in G_0$. We have to show that $x^{-1}yx\in G_1$.
Then
\begin{gather}
  (x^{-1}yx)^{-1}g(x^{-1}yx)=x^{-1}y^{-1}xgx^{-1}yx=\nonumber\\
  x^{-1}h^{-1}xgx^{-1}hx=(x^{-1}hx)^{-1}g(x^{-1}hx).\nonumber
\end{gather}
Hence, the conjugation by $x^{-1}yx$ is the same as conjugation by $x^{-1}hx$.
\end{proof}

Let $\pi$ be the representation of $G_0$ on the vector space $V$.
Let $x\in G\setminus G_0$ be any representative of the class $xG_0$.
In order to understand the representation $\ind_{G_0}^G\pi$, one considers
representations $\pi^x(g)=\pi(x^{-1}gx)$ where $x\in G\setminus G_0$
and $g\in G_0$.
Let us assume that $\pi^x$ is equivalent to $\pi$.
Then there exists an intertwining operator $S_x$ such that
\begin{equation}\label{d4}
  \pi^x(g)S_x=S_x\pi(g).
\end{equation}
It should be more correct to write $S_x(\pi)$, but we write $S_x$ since
$\pi$ is fixed.

\vr
\noindent{\bfseries Remark.} Elements $x\in G_2\setminus G_1$ correspond to
outer automorphisms of the group $G_0$. Theorem \ref{out} shows that there are
3 types of such automorphisms. Operators $S_x$ can be easily described for the
first type of outer automorphisms. Let $G=SL(2,\R)$, $V$ be the $(\g,K)$-module
and $V=\bigoplus_{j\in\Z}V_j$ where $V_j$ are $K$-types.
Let $x=\left[\begin{array}{cc}1&0\\0&-1\end{array}\right]$.
Then the conjugation by $x$ is the outer automorphism and $S_xv_j=v_{-j}$
(see (\ref{e64})). More details will be provided later.
\vr

If $x,y\in G$ have the property that $\pi^x$ and $\pi^y$ are equivalent to $\pi$
then $\pi^{xy}$ is equivalent to $\pi$ and (one possible) intertwining
operator $S_{xy}$ is equal to
\begin{equation}\label{d6}
  S_{xy}=S_yS_x.
\end{equation}
Also, it is easy to see that one possible intertwining operator for
$x^{-1}$ is $S_x^{-1}$.

\begin{definition}
  The subgroup $G_2$ is the set of elements $x\in G$
  for which $\pi^x$ is equivalent to $\pi$.
\end{definition}

If the representation $\pi$ is irreducible, operator $S_x$ is unique up to a
scalar value. It follows from (\ref{d6}) that $S_x^2$ is the intertwining
operator for $\pi^{x^2}$ and in general
\begin{equation}\label{d8}
  \pi^{x^k}(g)S_x^k=S_x^k\pi(g),\;\forall k\in\N\,(\Z).
\end{equation}
Since $G/G_0$ is finite, $x^{n(x)}=h(x)\in G_0$ for some $n(x)\in\N$
and (\ref{d8}) transforms to
\begin{equation}\nonumber
  \pi((x^{n(x)})^{-1})\pi(g)\pi(x^{n(x)})S_x^{n(x)}=S_x^{n(x)}\pi(g).
\end{equation}
It shows that $\pi(h(x))S_x^n$ is the intertwining operator for the
representation $\pi$. Since $\pi$ is irreducible, Schur's Lemma shows that
$\pi(h(x))S_x^{n(x)}=\lambda I$ for some $\lambda\in\C$. Hence, we can
take $\ds \frac1{\sqrt[n(x)]{\lambda}}S_x$ instead of $S_x$ in order to obtain
\begin{equation}\label{d12}
  \pi(h(x))S_x^{n(x)}=I.
\end{equation}
Let $\xi_n=\cos\frac{2\pi}{n}+i\sin\frac{2\pi}{n}$.
Then request (\ref{d12}) puts another restriction on $S_x$: $S_x$ is unique
up to a power of $\xi_{n(x)}$.

If $h\in G_0$, then
\begin{equation}\label{d13}
  S_h=\pi(h^{-1}).
\end{equation}
Relation (\ref{d12}) is satisfied.
Also, (\ref{d6}) is satisfied.

If $x\in G_1$, say $x^{-1}gx=h^{-1}gh$ for $h\in G_0$,
then $\pi^x$ is equivalent to $\pi$ and $S_x=\pi(h^{-1})$.
It is easy to see that (\ref{d12}) is satisfied.
Now, let us assume that $x\in G_2/G_0$ and $S_x$ are fixed. For $h,g\in G_0$,
$S_{xh}$ and $S_{gx}$ are defined such that (\ref{d6}) and (\ref{d13})
satisfied, i.e.
\begin{equation}\nonumber
  S_{xh}=\pi(h^{-1})S_x\quad\mbox{and}\quad S_{gx}=S_x\pi(g^{-1}).
\end{equation}
We have to show that these definitions are good i.e. $S_{xh}=S_{gx}$ if
$xh=gx$. However, the first we show that operators $S_{xh}$ and $S_{gx}$
satisfy (\ref{d12}).

\begin{lemma}\label{Sgxh}
	Operators $S_{xh}$ and $S_{gx}$ satisfy (\ref{d12}).
\end{lemma}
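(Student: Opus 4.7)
The plan is to verify both identities of (\ref{d12}) for $S_{xh}$ and $S_{gx}$ by parallel bookkeeping computations: expand the $n$-th power of the operator by sliding all $S_x$ factors to the right, expand the corresponding group element $(xh)^n$ or $(gx)^n$ using conjugation by $x$, and match the two sides against (\ref{d12}) for $x$ itself. Throughout, set $\alpha = \Ad(x)|_{G_0}$, so that $xg = \alpha(g)x$ and, by iterating (\ref{d4}), $S_x^k\pi(g) = \pi(\alpha^{-k}(g))S_x^k$ for all $g\in G_0$ and $k\in\Z$. Because $xh$, $gx$ and $x$ lie in the same coset of $G_0$, their orders in $G/G_0$ coincide, so $n(xh) = n(gx) = n(x) =: n$.

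For $S_{xh} = \pi(h^{-1})S_x$, sliding each successive $S_x$ past the remaining $\pi(h^{-1})$'s to its right yields
\[
S_{xh}^n \;=\; \pi\bigl(h^{-1}\,\alpha^{-1}(h^{-1})\cdots\alpha^{-(n-1)}(h^{-1})\bigr)\,S_x^n.
\]
Independently, the identity $hx = x\alpha^{-1}(h)$ gives by induction
\[
(xh)^n \;=\; x^n\,\alpha^{-(n-1)}(h)\cdots\alpha^{-1}(h)\,h \;=\; h(x)\bigl(h(x)^{-1}h(xh)\bigr).
\]
The two $G_0$-factors that appear are therefore inverses of each other, so $S_{xh}^n = \pi(h(xh)^{-1}h(x))\,S_x^n$, and (\ref{d12}) for $x$ gives $\pi(h(xh))\,S_{xh}^n = \pi(h(x))\,S_x^n = I$.

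The case $S_{gx} = S_x\pi(g^{-1})$ is parallel: the same sliding produces $S_{gx}^n = \pi\bigl(\alpha^{-1}(g^{-1})\cdots\alpha^{-n}(g^{-1})\bigr)S_x^n$, while $xg=\alpha(g)x$ yields $(gx)^n = g\,\alpha(g)\cdots\alpha^{n-1}(g)\,h(x)$. The only real obstacle is that these two $G_0$-products are now indexed over shifted ranges of powers of $\alpha$, so they cannot be compared at face value. This is resolved by the key observation that $\alpha^n$ is the \emph{inner} automorphism $y\mapsto h(x)\,y\,h(x)^{-1}$ of $G_0$, since $x^n = h(x)\in G_0$; applying $\alpha^{-n}$ to $g\,\alpha(g)\cdots\alpha^{n-1}(g)$ converts it into $\alpha^{-n}(g)\cdots\alpha^{-1}(g)$, and the same matching as before gives $\pi(h(gx))\,S_{gx}^n = \pi(h(x))\,S_x^n = I$. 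No input beyond (\ref{d4}), (\ref{d6}), (\ref{d12}), and the normality of $G_0$ in $G$ is needed; the difficulty is purely one of index-bookkeeping.
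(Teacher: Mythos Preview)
Your argument is correct and follows essentially the same route as the paper: both slide the $S_x$ factors to one side using the intertwining relation~(\ref{d4}) and reduce to $\pi(h(x))S_x^{n(x)}=I$. Your use of $\alpha=\Ad(x)$ makes the bookkeeping cleaner, and you actually work out the $S_{gx}$ case---noting that $\alpha^{n}$ is inner (conjugation by $h(x)$) to reconcile the shifted index ranges---whereas the paper simply declares that case ``similar'' without flagging this small wrinkle.
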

\begin{proof}
Let us prove the lemma for operators $S_{xh}$.
\[ \pi((xh)^{n(x)}(\pi(h^{-1})S_x)^{n(x)}=
   \pi(xh\ldots x)S_x\pi(h^{-1})\ldots S_x= \]
\[ =\pi\left(x^{n(x)}x^{1-n(x)}hx^{n(x)-1}x^{2-n(x)}h\ldots x^{-1}hx\right)
   S_x\pi(h^{-1})S_x\pi(h^{-1})\ldots S_x= \]
\[ =\pi(x^{n(x)})\pi^{x^{n(x)-1}}(h)\pi^{x^{n(x)-2}}(h)\ldots\pi^{x^2}(h)
   \underbrace{\pi^x(h)S_x}\pi(h^{-1})S_x\pi(h^{-1})\ldots
   S_x\stackrel{(\ref{d4})}{=} \]
\[ =\pi(x^{n(x)})\pi^{x^{n(x)-1}}(h)\pi^{x^{n(x)-2}}(h)\ldots\pi^{x^2}(h)
   \underbrace{S_x\pi(h)}\pi(h^{-1})S_x\pi(h^{-1})\ldots S_x= \]
\[ =\pi(x^{n(x)})\pi^{x^{n(x)-1}}(h)\pi^{x^{n(x)-2}}(h)\ldots\pi^{x^2}(h)
   S_x^2\pi(h^{-1})S_x\pi(h^{-1})\ldots S_x\stackrel{(\ref{d8})}{=}
   \ldots\stackrel{(\ref{d8})}{=} \]
\[ \stackrel{(\ref{d8})}{=}\pi(x^{n(x)})\pi^{x^{n(x)-1}}(h)
   S_x^{n(x)-1}\pi(h^{-1})S_x\stackrel{(\ref{d8})}{=}
   \pi(x^{n(x)})S_x^{n(x)}\stackrel{(\ref{d12})}{=}I. \]
The proof for the operators $S_{gx}$ is similar.
\end{proof}

\begin{proposition}
	Let $x\in G_2$ and $h,g\in G_0$. If $xh=gx$ then
	\begin{equation}\nonumber
	  S_{xh}=S_{gx}.
	\end{equation}
\end{proposition}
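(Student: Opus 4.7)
The plan is to expand both sides using their defining formulas and reduce the equality to the intertwining relation (\ref{d4}). By definition, $S_{xh}=\pi(h^{-1})S_x$ and $S_{gx}=S_x\pi(g^{-1})$, so what must be verified is
\begin{equation*}
\pi(h^{-1})S_x = S_x\pi(g^{-1}).
\end{equation*}

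The single substantive step is to exploit the hypothesis $xh=gx$. I rewrite it as $h=x^{-1}gx$, hence $h^{-1}=x^{-1}g^{-1}x$. Then I apply the defining identity (\ref{d4}), namely $\pi(x^{-1}g'x)S_x=S_x\pi(g')$, with the group element $g'=g^{-1}\in G_0$. The left-hand side becomes $\pi(x^{-1}g^{-1}x)S_x=\pi(h^{-1})S_x$, while the right-hand side is $S_x\pi(g^{-1})$, which is exactly the required equality.

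There is no real obstacle here; the proposition is a consistency check showing that the two ad hoc recipes $S_{xh}=\pi(h^{-1})S_x$ (thinking of a representative of the coset as $x\cdot h$) and $S_{gx}=S_x\pi(g^{-1})$ (thinking of the same representative as $g\cdot x$) coincide whenever the two parametrisations of the element agree. The only point worth emphasising is that this agreement does \emph{not} require the normalisation (\ref{d12}) or the irreducibility of $\pi$: it follows solely from the bare intertwining property of $S_x$, so the extension of $S$ to the whole coset $G_0x = xG_0$ is unambiguous prior to any scaling choice, and the preceding Lemma \ref{Sgxh} then upgrades this to the statement that the resulting operator still satisfies the normalisation (\ref{d12}).
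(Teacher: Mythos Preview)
Your proof is correct and considerably more direct than the paper's. You simply unwind the definitions and invoke the intertwining relation~(\ref{d4}) once with $g'=g^{-1}$, which immediately yields $\pi(h^{-1})S_x=S_x\pi(g^{-1})$.

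The paper instead argues as follows: since $S_{xh}$ and $S_{gx}$ both intertwine $\pi$ with $\pi^{xh}=\pi^{gx}$, Schur's lemma (using irreducibility of $\pi$) gives $S_{xh}S_{gx}^{-1}=\lambda I$; Lemma~\ref{Sgxh} forces $\lambda$ to be an $n(x)$-th root of unity; and then the map $h\mapsto f(h)=\pi(h^{-1})S_x\pi(xhx^{-1})S_x^{-1}$ is continuous from the connected group $G_0$ into a finite set, hence constant, with $f(e)=I$. Your computation shows that in fact $f(h)\equiv I$ by a one-line application of~(\ref{d4}), so the appeal to irreducibility, to Lemma~\ref{Sgxh}, and to connectedness are all superfluous here. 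Your remark that the consistency of the two recipes holds prior to any normalisation or irreducibility assumption is exactly right, and is an improvement over the paper's argument.
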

\begin{proof}
Operators $S_{xh}$ and $S_{gx}$ are intertwining operators for $xh=gh$.
Hence, $S_{xh}S_{gx}^{-1}=\lambda I$ for some $\lambda\in\C$ and the
identity operator $I$ on $V$. Lemma \ref{Sgxh} shows that $\lambda=\xi_{n(x)}$.
For given $h\in G_0$ and $x\in G_2$, $g(h)=xhx^{-1}$ is a continuous function.
Now, let us consider a function
\begin{equation}\nonumber
  f(h)=S_{xh}S_{gx}^{-1}=\pi(h^{-1})S_x\pi(xhx^{-1})S_x^{-1}.
\end{equation}
Since $f$ is a continuous function and the codomain is a discrete set,
$f$ is a constant function. Since $f(e)=I$, $f(h)=I$ for all $h\in G_0$
and $S_{xh}=S_{gx}$.
\end{proof}

It shows that we can work with the class $xG_0$ instead with a particular
representative.

Let us assume that intertwining operators $S_x$ are chosen and fixed
for all elements (classes) of $G_2$.
Now, let us consider two particular elements $x,y\in G_2$.
The operator $S_yS_x$ is the intertwining operator for $xy$ but it may not
satisfy (\ref{d12}). Even if it satisfies (\ref{d12}), it can happen that
$S_yS_x=\xi_{n(xy)}^kS_{xy}$ for some $k$.
However, by Schur's lemma, we know that $S_yS_x$ and $S_{xy}$ are proportional.
Hence, we define coefficients $\beta$.

\begin{definition}\label{beta}
  For the given choice of $S_x$, we define
  \begin{equation}\label{d16}
    S_yS_x=\beta(x,y)S_{xy}.
  \end{equation}
  for some $\beta(x,y)\in\C$.
\end{definition}

Coefficients $\beta(x,y)$ encode information related to reducibility of
$\ind_{G_0}^{G_2}\pi$. Hence, let us analyze properties of them.
We will need it in Subsection \ref{ind02}.

\begin{lemma}\label{as}
  Let $x,y,z\in G_2/G_0$. Then
  \begin{equation}\nonumber
    \beta(x,y)\beta(xy,z)=\beta(x,yz)\beta(y,z).
  \end{equation}
\end{lemma}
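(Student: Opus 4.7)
The plan is to exploit associativity of operator composition. By Definition~\ref{beta}, the operator $S_x$ assigned to each class in $G_2/G_0$ satisfies the relation $S_yS_x=\beta(x,y)S_{xy}$, so computing the triple product $S_zS_yS_x$ in two different ways (left-associated versus right-associated) should yield the identity.

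First I would compute $(S_zS_y)S_x$. Applying (\ref{d16}) once to the inner pair gives $S_zS_y=\beta(y,z)S_{yz}$, and applying it again to the pair $(S_{yz},S_x)$ gives $\beta(y,z)\beta(x,yz)S_{xyz}$. Next I would compute $S_z(S_yS_x)$ analogously: the inner pair produces $\beta(x,y)S_{xy}$, and a second application to $(S_{xy},S_z)$ yields $\beta(x,y)\beta(xy,z)S_{xyz}$. Since operator composition on $V$ is associative, these two expressions are equal, so
\begin{equation}\nonumber
  \beta(y,z)\beta(x,yz)S_{xyz}=\beta(x,y)\beta(xy,z)S_{xyz}.
\end{equation}

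Finally, I would invoke that $S_{xyz}$ is an invertible intertwining operator (by construction it satisfies (\ref{d12}), hence is nonzero), so it may be cancelled from both sides, yielding the desired cocycle identity. There is no real obstacle here: the only point that needs care is confirming that $\beta$ is well-defined on classes, so that writing $\beta(x,yz)$ and $\beta(xy,z)$ makes sense irrespective of the $G_0$-coset representatives, but this has already been established by the preceding proposition showing $S_{xh}=S_{gx}$ whenever $xh=gx$.
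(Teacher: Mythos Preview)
Your proof is correct and follows exactly the same approach as the paper: both arguments expand $S_zS_yS_x$ in two ways via the defining relation $S_yS_x=\beta(x,y)S_{xy}$ and compare the resulting scalar multiples of $S_{xyz}$. The only difference is presentational---the paper writes the computation as a single chain of equalities, while you spell out the two associations separately and add the (harmless) remarks on invertibility of $S_{xyz}$ and well-definedness on cosets.
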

\begin{proof}
Relation (\ref{d16}) shows that
\begin{gather}
  \beta(x,y)\beta(xy,z)S_{xyz}=\beta(x,y)S_zS_{xy}=S_zS_yS_x=\nonumber\\
  =\beta(y,z)S_{yz}S_x=\beta(x,yz)\beta(y,z)S_{xyz}.\nonumber
\end{gather}
It remains to compare the first and the last term.
\end{proof}

\begin{lemma}\label{beta1}
  Let $x\in G_2/G_0$. Then
  \begin{equation}\nonumber
  \prod_{y\in G_2/G_0}\beta(x,y)=1,\quad\prod_{y\in G_2/G_0}\beta(y,x)=1.
  \end{equation}
\end{lemma}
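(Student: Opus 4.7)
The plan is to exploit that $\beta(x,y)$ records the discrepancy between $S_{xy}$ and $S_yS_x$ (up to a scalar), and to telescope this discrepancy along $\langle x\rangle$-orbits in $G_2/G_0$; relation (\ref{d12}) then collapses what remains.

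For the first product, fix $x\in G_2/G_0$ and let $m=n(x)$ be its order modulo $G_0$. Since $G_0$ is normal in $G$, $x^iyG_0=yG_0$ forces $x^i\in G_0$, so $\langle x\rangle$ acts freely by left multiplication on $G_2/G_0$ and partitions it into orbits of size $m$. Rewriting (\ref{d16}) as $S_{xy}=\beta(x,y)^{-1}S_yS_x$ and iterating $m$ times along the orbit through a representative $y$ yields
\[
S_{x^my}=\left(\prod_{j=0}^{m-1}\beta(x,x^jy)\right)^{-1}S_yS_x^m.
\]
Now $x^m=h(x)\in G_0$; the rule $S_{gx'}=S_{x'}\pi(g^{-1})$ (applied with $g=h(x)$, $x'=y$) identifies the left side as $S_{h(x)y}=S_y\pi(h(x)^{-1})$, while (\ref{d12}) gives $S_x^m=\pi(h(x)^{-1})$. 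Equating and cancelling the common factor $S_y\pi(h(x)^{-1})$ forces $\prod_{j=0}^{m-1}\beta(x,x^jy)=1$ on each orbit, and multiplying over a set of orbit representatives yields the first identity.

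For the second identity the symmetric calculation works on the right. Iterating $S_{yx}=\beta(y,x)^{-1}S_xS_y$ $m$ times gives
\[
S_{yx^m}=\left(\prod_{j=0}^{m-1}\beta(yx^j,x)\right)^{-1}S_x^mS_y,
\]
and now $yx^m=yh(x)$ is rewritten via the other one-sided rule $S_{xh}=\pi(h^{-1})S_x$ (applied with $x=y$, $h=h(x)$) as $\pi(h(x)^{-1})S_y$. Relation (\ref{d12}) again collapses the right-hand side to the same expression, forcing $\prod_{j=0}^{m-1}\beta(yx^j,x)=1$ on each right $\langle x\rangle$-orbit, hence $\prod_y\beta(y,x)=1$.

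The main point to watch is matching the two one-sided rules to the two arguments: the first identity needs $S_{gx'}=S_{x'}\pi(g^{-1})$ to peel $h(x)$ off the left of $x^my$, while the second needs $S_{xh}=\pi(h^{-1})S_x$ to peel $h(x)$ off the right of $yx^m$. Beyond this bookkeeping and the single application of (\ref{d12}) at the end of each telescoping, I do not anticipate any real obstacle.
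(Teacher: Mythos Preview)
Your proof is correct and is essentially the paper's own argument: both telescope the defining relation (\ref{d16}) along the $\langle x\rangle$-orbit $\{y,xy,\ldots,x^{n(x)-1}y\}$, invoke (\ref{d12}) together with the one-sided rules $S_{gx'}=S_{x'}\pi(g^{-1})$ and $S_{xh}=\pi(h^{-1})S_x$ to close the loop, and then multiply over orbit representatives. The only cosmetic difference is direction---the paper starts from $S_yS_x^{n(x)}$ and peels off factors of $\beta$, whereas you start from $S_{x^{n(x)}y}$ and accumulate factors of $\beta^{-1}$---but the identity $\prod_{j=0}^{n(x)-1}\beta(x,x^jy)=1$ obtained is the same.
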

\begin{proof}
Exponent $n(x)$ was defined such that $x^{n(x)}=h(x)\in G_0$. Now,
\begin{gather}\nonumber
  S_y\pi(h(x)^{-1})\stackrel{(\ref{d12})}{=}S_yS_x^{n(x)}=\beta(x,y)S_{xy}
  S_x^{n(x)-1}=\beta(x,y)\beta(x,xy)S_{x^2y}S_x^{n(x)-2}=\nonumber\\
  =\ldots=\beta(x,y)\beta(x,xy)\ldots
  \beta(x,x^{n(x)-1}y)S_{x^{n(x)}y}\label{d23}.
\end{gather}
Now, use that
\begin{equation}\label{d24}
  S_{x^{n(x)}y}=S_{h(x)y}=S_y\pi(h(x)^{-1})
\end{equation}
Relations (\ref{d23}) and (\ref{d24}) show that
\begin{equation}\label{d26}
  \beta(x,y)\beta(x,xy)\ldots\beta(x,x^{n(x)-1})=1.
\end{equation}
Since $n(x)$ divides $n=|G_2/G_0|$, $G_2/G_0$ can be written as a disjoint
union of classes $\{x^{n(x)-1}y,\ldots,xy,y\}$. Since the product (\ref{d26})
is equal to 1 for each y in $G_2/G_0$, the first relation follows.
The second formula can be proved similarly.
\end{proof}

\begin{proposition}\label{betaroot}
  Let $x,y\in G_2$. Then
  \begin{equation}\nonumber
    \beta(x,y)^n=1.
  \end{equation}
\end{proposition}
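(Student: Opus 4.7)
The plan is to combine the cocycle identity from Lemma \ref{as} with the two "product-equals-one" identities from Lemma \ref{beta1}. Concretely, I would fix $x, y \in G_2/G_0$ and take the product of the cocycle relation
\[
  \beta(x,y)\,\beta(xy,z) = \beta(x,yz)\,\beta(y,z)
\]
as $z$ ranges over $G_2/G_0$. The left-hand side becomes $\beta(x,y)^n \cdot \prod_z \beta(xy,z)$, since the first factor does not depend on $z$ and there are $n = |G_2/G_0|$ terms. The right-hand side is $\bigl(\prod_z \beta(x,yz)\bigr) \cdot \bigl(\prod_z \beta(y,z)\bigr)$.

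Next I would apply Lemma \ref{beta1} three times. The factor $\prod_z \beta(xy,z)$ on the left is $1$ by the first identity in that lemma (with $xy$ playing the role of the fixed element). On the right, $\prod_z \beta(y,z) = 1$ is the same identity again (with $y$ fixed); and for $\prod_z \beta(x,yz)$, since left-multiplication by $y$ is a bijection of $G_2/G_0$ onto itself, the substitution $w = yz$ turns this into $\prod_w \beta(x,w) = 1$. Thus the identity reduces to $\beta(x,y)^n = 1$, as required.

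I do not expect a genuine obstacle here: everything is an immediate consequence of the cocycle relation once one notices that taking a product over $z$ collapses three of the four factors via Lemma \ref{beta1}. The only small point worth checking is that the product indexation is legitimate, i.e.\ that the $\beta$'s depend only on the cosets $xG_0, yG_0, zG_0$ so that the product over $G_2/G_0$ is well-defined; this is guaranteed by the normalization conventions for $S_{xh}$ and $S_{gx}$ established earlier in Section \ref{s}.
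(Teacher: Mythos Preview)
Your proof is correct and is essentially identical to the paper's own argument: the paper also multiplies the cocycle identity from Lemma~\ref{as} over all $z\in G_2/G_0$ and then invokes Lemma~\ref{beta1} to collapse the three $z$-dependent products to~$1$. You have simply made the substitution $w=yz$ explicit where the paper leaves it implicit.
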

\begin{proof}
Lemma \ref{as} shows that
\begin{equation}\label{d28}
  \beta(x,y)\beta(xy,z)=\beta(x,yz)\beta(y,z).
  \end{equation}
Now, we write (\ref{d28}) for all $z\in G_2/G_0$ and multiply them.
Lemma \ref{beta1} gives
\begin{equation}\nonumber
  \beta(x,y)^n\cdot1=1\cdot1
\end{equation}
and it proves the proposition.
\end{proof}

\noindent{\bfseries Remark.} Relation (\ref{d12}) was needed only in the proof
of Lemma (\ref{beta1}). Once, we have Proposition (\ref{betaroot}), we can
multiply operators $S_x$ by $\xi_n^l$ for any $l\in\Z$. Proposition
(\ref{betaroot}) will be still valid. Sometimes it will lead to a simpler set of
coefficients $\beta(x,y)$.
\vr

\noindent{\bfseries Remark.} A reader familiar with cohomology of groups
will find some similarities between our construction and the second cohomology
group. We say that the group $E$ is the central extension of the group $F$ if
there exists a short exact sequence
\begin{equation}\label{d32}
  1\rightarrow A\stackrel{i}{\rightarrow}E\stackrel{\pi}{\rightarrow}
  F\rightarrow1
\end{equation}
where $A$ is Abelian and central in $E$. Central extensions are parametrized
by $H^2(F,A)$. A key ingredient in that construction are a set-theoretic section
$\sigma:F\rightarrow E$ and a function $f:F\times F\rightarrow A$ such that
\begin{equation}\nonumber
  \sigma(x)\sigma(y)=i(f(x,y))\sigma(x,y).
\end{equation}
For more details see, for example, \cite{br}.
Let us assume that $G=G_1$.
In our construction $F$ from (\ref{d32}) is equal to the quotient $G/G_0$,
$A=Z_0$ and $E=Z_1$ (or $A=G_0$ and $E=Z_1$).
One could say that $\beta(x,y)=\zeta(i(f(x,y)))$ where $\zeta:Z_0\rightarrow\C$
is the character of the given representation. However, in our construction of
$\beta$, intertwining operators $S$ were included. It could be interesting to
consider the case when $G=G_2$ and find relationship between possible
choices for $\beta:G_2/G_0\rightarrow\C$ (satisfying Lemmas \ref{as} and
\ref{beta1}) and extensions of $G_2/G_0$.

\section{Induction}\label{ind}

Let $H$ be the subgroup of $G$ and $(\pi,V)$ the representation of $H$.
The usual approach to the induction procedure is to consider the space
\begin{equation}\nonumber
  \ind_H^GV=\{f:G\rightarrow V\,|\,f(xh)=\pi(h)^{-1}f(x),
  x\in G,h\in H\}
\end{equation}
and the action is given by $(\ind_{G_0}^G\pi(g)(f))(x)=f(g^{-1}x)$.
We will use a different notation and write
\begin{equation}\label{e8}
  W=\ind_H^GV=\left\{w=\sum_{x\in G/H}xv_x\,|\,v_x\in V\right\}
\end{equation}
where sum goes over all classes of $G/H$ and $x$ is some representative.
We can write
\begin{equation}\label{e9}
  W=\{w(v_x)=\sum_{x\in G/H}xv_x\,|\,v_x\in V\}
\end{equation}
or
\begin{equation}\label{e10}
W=\{w=\sum_{x\in G/H}xv_x(w)\,|\,v_x(w)\in V\}.
\end{equation}
Relation (\ref{e9}) emphasizes that any collection of $|G/H|$ vectors from
$V$ determines an element $w$ in $W$.
Relation (\ref{e10}) emphasizes that any $w$ in $W$ determines the
collection of vectors $v_x(w)$.
Usually, we will write elements of $W$ in terms of (\ref{e8}).
The action of $g\in G$ is given by
\begin{equation}\nonumber
  g.\sum_{x\in G/H}xv_x=\sum_{x\in G/H}xh(x)v_x
\end{equation}
for some elements $h(x)\in H$.
The action can be described more precisely if $H$ is the normal subgroup of $G$.
The identity component, $G_ 0$ is the normal subgroup subgroup of $G$.
Also, the subgroup $G_1$ is the normal subgroup of $G$ (Proposition \ref{p1}).

Let $N$ be the normal subgroup of $G$ and $K(N)$ the maximal compact subgroup
of $N$. Since $G/N\cong K/K(N)$, we can write
\begin{equation}\label{e12}
  \sum_{x\in K/K(N)}xv_x
\end{equation}
instead of (\ref{e8}).
The action $\ind_N^G\pi(n)$ for $n\in N$ is given by
\begin{equation}\nonumber
  n.\left(\sum_{x\in G/N}xv_x\right)=\sum_{x\in G/N}x(x^{-1}nx).v_x
\end{equation}
and the action $\ind_{G_0}^G\pi(y)$ for $y\not\in N$ is given by
\begin{equation}\label{e16}
  y.\left(\sum_{x\in G/N}xv_x\right)=\sum_{x\in G/N}yx.v_x.
\end{equation}
The element $yx$ may not be the representative of the class $yxG_0$.
If $z(x,y)$ is the chosen representative of the class $yxG_0$ in (\ref{e8})
then $yx=z(x,y)n(y,x)$ for $n(y,x)\in G_0$ and the "more precise" form of
(\ref{e16}) is
\begin{equation}\nonumber
  y.\left(\sum_{x\in G/N}xv_x\right)=\sum_{x\in G/N}z(x,y)n(y,x).v_x.
\end{equation}
For $(\g,K)$-modules, the action of $X\in\g$ on (\ref{e12}) can be obtained
using (\ref{b4}):
\begin{equation}\nonumber
  X.\left( \sum_{x\in K/K(N)}xv_x\right) =
  \sum_{x\in K/K(N)}x\left(\mathrm{Ad}(k^{-1})X\right).v_x.
\end{equation}

Sometimes, it is convenient to use operators $S$ in (\ref{e8}).
Let us assume that $\pi^x$ is equivalent to $\pi$ for all $x\in G/N$.
Then we write
\begin{equation}\nonumber
  \ind_N^GV=\left\{\sum_{x\in G/N}xS_xv_x\,|\,v\in V\right\}.
\end{equation}
The action of $y\not\in N$ is unchanged but the action of $n\in N$
transforms to
\begin{equation}\nonumber
  n.\left(\sum_{x\in G/N}xS_xv_x\right)=\sum_{x\in G/N}x(x^{-1}nx)S_xv_x
  \stackrel{(\ref{d4})}{=}\sum_{x\in G/N}xS_xnv_x.
\end{equation}
Let us analyze irreducible components of $W=\ind_N^GV$ if $\pi^x$ in that case.
Let us write
\begin{equation}\nonumber
  W=\ind_N^GV=\bigoplus_jW_j.
\end{equation}
In particular, we are interested in the form of elements $w_j\in W_j$

\begin{lemma}\label{dec}
  Let us assume that $N$ is the normal subgroup of $G$,
  $\pi^x$ is equivalent to $\pi$ for all $x\in G/N$
  and $W_j$ is an irreducible component of $\ind_N^GV$.
  Then there exists $m\in\N$,
  and a collection of $\lambda_x^l$, $x\in G/N$, $l\in\{1,\ldots,m\}$
  such that any element $w_j\in W_j$ has the form
  \begin{equation}\nonumber
    w_j=w_j(v_1,\ldots,v_m)=\sum_{x\in G/N}xS_x
    (\lambda_x^1 v_1+\ldots+\lambda_x^mv_m)
  \end{equation}
  where $v_i\in V$, $S_x$ are intertwining operators and $\lambda_x^l\in\C$.
  It should be more correct to write $\lambda_{x,j}^l$, $v_{l,j}$ and $m(j)$
  instead of $\lambda_x^l$, $v_l$ and $m$, but we assume that the index
  $j$ is fixed.
\end{lemma}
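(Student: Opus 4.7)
The plan is to exploit normality of $N$ to observe that $W_j$, being a $G$-submodule, is in particular an $N$-submodule of $W$, and then to classify all $N$-submodules of $W$ in the twisted parametrisation $\sum_x xS_xv_x$. The crucial point is that in this parametrisation the action of $n\in N$ is diagonal: the last displayed computation of Section~\ref{s} gives
\[
n.\sum_{x\in G/N}xS_xv_x=\sum_{x\in G/N}xS_x\pi(n)v_x,
\]
so that $W\cong V^{\oplus|G/N|}$ as $N$-modules, with $N$ acting via $\pi$ on each summand.

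Next I would invoke Schur's lemma for the irreducible $N$-module $V$ to identify $\mathrm{Hom}_N(V,W)\cong\C^{|G/N|}$, with basis given by the embeddings $\iota_x:v\mapsto xS_xv$ for $x\in G/N$. Any $N$-submodule of $W$ is then realised as the image of an $N$-equivariant map $V^{\oplus m}\to W$, hence is parametrised by a subspace $U\subseteq\C^{|G/N|}$ of dimension $m$: choosing a basis $(\lambda_x^l)_{x\in G/N}$, $l=1,\ldots,m$, of $U$, the corresponding submodule equals
\[
\Big\{\sum_{x\in G/N}xS_x\Big(\sum_{l=1}^m\lambda_x^lv_l\Big)\;:\;v_1,\ldots,v_m\in V\Big\}.
\]
Applying this to the particular $N$-submodule $W_j$ and setting $m=m(j)=\dim U_j$ yields the asserted description; the freedom to choose a basis of $U_j$ is precisely the freedom noted in the statement that strictly speaking one should write $\lambda_{x,j}^l$.

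The main obstacle is the Schur step, since if $V$ is infinite-dimensional the identity $\mathrm{End}_N(V^{\oplus|G/N|})\cong M_{|G/N|}(\C)$ depends on $\mathrm{End}_N(V)=\C$. In the intended setting $V$ is an irreducible admissible $(\g,K)$-module, and the infinitesimal Schur lemma of Harish-Chandra/Dixmier supplies exactly this identity; alternatively one can restrict attention to $K$-finite vectors, where $V$ becomes an irreducible module over the appropriate Hecke algebra and Schur is immediate. Once this point is secured, the rest of the argument is a direct unwinding of the above parametrisation.
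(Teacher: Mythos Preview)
Your argument is correct and reaches the same conclusion as the paper, but by a more structural route. The paper proceeds by an explicit elimination: it orders the cosets $x\in G/N$, starts with $v_1=v_e$, and at each new coset $y$ asks whether $v_y(w_j)$ is forced by the previously chosen $v_1,\ldots,v_p$; if so, it writes $v_y=\sum_l A_y^l v_l$, acts by $n\in N$, and compares to obtain $\pi^y(n)A_y^l=A_y^l\pi(n)$, whence $A_y^l=\lambda_y^l S_y$ by Schur. Your version packages exactly this Schur step into the identification $\mathrm{Hom}_N(V,V^{\oplus|G/N|})\cong\C^{|G/N|}$ and then reads off the submodule as the subspace $U_j\subseteq\C^{|G/N|}$. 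What you gain is clarity and a statement that visibly classifies \emph{all} $N$-submodules at once; what the paper's iterative argument gains is that it never needs to name semisimplicity explicitly. Note that your sentence ``any $N$-submodule of $W$ is realised as the image of an $N$-equivariant map $V^{\oplus m}\to W$'' tacitly uses that $V^{\oplus|G/N|}$ is semisimple (a finite sum of simples), so that every submodule is itself isomorphic to some $V^{\oplus m}$; this is standard, but worth saying. Two small corrections: the diagonal-action formula you cite sits in Section~\ref{ind}, just before the lemma, not in Section~\ref{s}; and your discussion of the infinite-dimensional Schur lemma is exactly the right caveat, since the paper uses Schur in the same way without comment.
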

\begin{proof}
The subspace $W_j$ contains elements $w_j$ of the form (\ref{e8}). Let us
put elements of $G/N$ in a sequence, starting with $e$. We set $v_1=v_e$.
Now, consider the second element, say $x$.
Let us recall (\ref{e10}) and write $v_x(w_j)$.
If $v_x(w_j)$ is not a function of $v_e(w_j)$ for all $w_j$,
then we choose $v_2(w_j)$ such that $S_xv_2(w_j)=v_x(w_j)$.
If $v_x(w_j)$ depends on $v_e(w_j)$ for all $w_j$, then $v_x(w_j)=A_xv_e(w_j)$
for some linear operator $A_x$ since $W_j$ is a linear space. Now, write
\begin{equation}\label{e24}
  w_j=ev_1(w_j)+xA_xv_1(w_j)+\sum_{y\in(G/N)\setminus\{eN,xN\}}yv_y(w_j).
\end{equation}
Now, we act by $n\in N$ on (\ref{e24}) and get
\begin{equation}\label{e28}
  n.w_j=e\pi(n)v_1(w_j)+x\pi^x(n)A_xv_1(w_j)+
  \sum_{y\in(G/N)\setminus\{eN,xN\}}y\pi^y(n)v_y(w_j).
\end{equation}
If we put $\pi(n)v_1(w_j)$ instead of $v_1(w_j)$ in (\ref{e24}), we get
\begin{equation}\label{e32}
  w_j=e\pi(n)v_1(w_j)+xA_x\pi(n)v_1(w_j)+
  \sum_{y\in(G/N)\setminus\{eN,xN\}}yv_y(w_j).
\end{equation}
One can compare vectors after $x$ in (\ref{e28}) and (\ref{e32}) and get
\begin{equation}\nonumber
  \pi^x(n)A_x=A_x\pi(n).
\end{equation}
It shows that $A_x=\lambda_x^1S_x$.
We continue in this way. In order to simplify notation, we switch to
terminology of (\ref{e8}) again and write $v_z$ instead of $v_z(w_j)$.
Let us assume that we already have independent
elements $v_1,\ldots,v_p$ and come to element $y$.
If $v_y$ does not depend on $v_1,\ldots,v_q$ in the same way for all $w_j$,
then we choose $v_{p+1}$ such that $S_yv_{p+1}=v_y$.
If $v_y$ depends on $v_1,\ldots,v_p$ in the same way for all $w_j$,
then put $v_y=A_y^1v_1+\ldots+A_y^pv_p$ in (\ref{e24}). In order to simplify
notation we will write only element with $y$. The action of $n\in N$ gives
\begin{equation}\label{e34}
  n.w_j=\ldots+y\pi^y(n)(A_y^1v_1+\ldots+A_y^pv_p)+\ldots
\end{equation}
If we put $\pi(n)v_l$ instead of $v_l$ in (\ref{e24}) for $l\in\{1,\ldots,p\}$,
we get
\begin{equation}\label{e36}
  w_j(\pi(n)v_1,\ldots,\pi(n),\ldots)=
  \ldots+y(A_y^1\pi(n)v_1+\ldots+A_y^p\pi(n)v_p)+\ldots
\end{equation}
Since $v_y$ depends on $\pi(n)v_1,\ldots,\pi(n)v_p$ and they coincide in
$n.w_j$ in (\ref{e34}) and in $w_j(\pi(n)v_1,\ldots,\pi(n)v_p,\ldots)$ in
(\ref{e36}), one concludes
\begin{equation}\nonumber
  \pi^y(n)(A_y^1v_1+\ldots+A_y^pv_p)=A_y^1\pi(n)v_1+\ldots+A_y^p\pi(n)v_p
\end{equation}
Hence,
\begin{equation}\nonumber
  \pi^y(n)A_y^l=A_y^l\pi(n)
\end{equation}
for $l\in\{1,\ldots,p\}$. It shows that
\begin{equation}\nonumber
  A_y^l=\lambda_y^lS_y.
\end{equation}
for some $\lambda_y^l\in\C$.
\end{proof}

The reducibility of $\ind_{G_0}^G\pi$ is related to equivalence of $\pi$
and $\pi^x$.

\begin{theorem}
	If $G_2=G_0$ then $\ind_{G_0}^G\pi$ is irreducible.
\end{theorem}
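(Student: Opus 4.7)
The plan is to exploit the $G_0$-isotypic decomposition of $W = \ind_{G_0}^G V$. By (\ref{e8}), as a vector space $W = \bigoplus_{x \in G/G_0} xV$, and on the summand $xV$ the group $G_0$ acts by $\pi^x$. The hypothesis $G_2 = G_0$ says precisely that $\pi^x \not\cong \pi$ for every $x \notin G_0$. Observing that $\pi^x \cong \pi^y$ iff $\pi^{xy^{-1}} \cong \pi$ iff $xy^{-1} \in G_2 = G_0$, the summands $xV$ indexed by distinct cosets of $G_0$ realize pairwise inequivalent irreducible $G_0$-modules (each $\pi^x$ is irreducible because $\pi$ is).

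Next I would invoke the uniqueness of the isotypic decomposition: any $G_0$-invariant subspace of $W$ is a direct sum of $G_0$-isotypic components. In our setting each isotypic component is already a single irreducible summand $xV$, so any $G$-invariant subspace $W'$ (being in particular $G_0$-invariant) must have the form $W' = \bigoplus_{x \in S} xV$ for some $S \subseteq G/G_0$.

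Finally I would impose $G$-invariance. According to (\ref{e16}), the action of $y \in G$ sends $xV$ into $(yx)V$ (the element of $G_0$ appearing when we replace $yx$ by the chosen coset representative preserves the summand). Hence $S$ must be closed under left multiplication by $G/G_0$ on itself, and since that action is transitive, $S$ is either empty or all of $G/G_0$. Therefore $W'$ is either $0$ or $W$, and $\ind_{G_0}^G \pi$ is irreducible.

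The only step that needs real justification is the isotypic decomposition claim; it rests on Schur's lemma (no nonzero intertwiners between inequivalent irreducibles) together with the trivial remark that $\pi^x$ is irreducible whenever $\pi$ is. Everything else reduces to bookkeeping with cosets of $G_0$ and the explicit formula (\ref{e16}) for the induced action.
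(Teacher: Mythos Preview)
Your argument is correct, but it takes a different route from the paper's. The paper dispatches the theorem in one line via Frobenius reciprocity:
\[
  \mathrm{Hom}_G\!\left(\ind_{G_0}^G\pi,\ind_{G_0}^G\pi\right)
  =\mathrm{Hom}_{G_0}\!\left(\pi,\bigoplus_{x\in G/G_0}\pi^x\right),
\]
and observes that the right-hand side is one-dimensional precisely when $\pi\not\cong\pi^x$ for all $x\notin G_0$, i.e.\ when $G_2=G_0$; irreducibility then follows because the induced module is unitary (hence completely reducible). Your proof instead works directly with the $G_0$-isotypic decomposition of $\ind_{G_0}^G V=\bigoplus_x xV$ and shows that any $G$-stable submodule is forced to be a union of summands that is stable under the transitive left action of $G/G_0$.

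Both arguments ultimately hinge on the same two facts---the pairwise inequivalence of the $\pi^x$ and Schur's lemma---but they package them differently. The Frobenius argument is shorter and more conceptual; your approach is more self-contained (it does not invoke reciprocity) and has the minor advantage that it makes explicit \emph{why} no proper invariant subspace can exist, rather than inferring this from $\dim\mathrm{Hom}_G=1$ together with semisimplicity. The one point you flag as needing justification---that a $G_0$-submodule of a finite direct sum of pairwise inequivalent irreducibles must be a partial sum---is indeed standard: the projection of any irreducible constituent of the submodule onto each $xV$ is either zero or an isomorphism by Schur, and it is nonzero for exactly one $x$, forcing that constituent to equal $xV$.
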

\begin{proof}
The theorem follows from the Frobenius reciprocity theorem. Since
\begin{equation}\nonumber
  \mathrm{Hom}_G\left(\ind_{G_0}^G\pi,\ind_{G_0}^G\pi\right)=
  \mathrm{Hom}_{G_0}\left(\pi,\bigoplus_{x\in G/G_0}x\pi^x\right)
\end{equation}
the left hand side is equal to \C\ if and only if the right hand side is
equal to \C\ and it is valid if and only if $\pi\not\cong\pi^x$.
\end{proof}

It is easy to see directly that $\ind_{G_0}^G\pi$ is reducible if
$\pi\cong\pi^x$. Let $G_x$ be the subgroup of $G$ of the form
$G_0\cup xG_0\cup\ldots\cup x^{n(x)-1}G_0$. In this paper we need the
definition and basic properties of coefficients $\beta$ (we have stopped at
Proposition \ref{betaroot}). Hence, for this comment we assume that
$\beta(x^m,x^k)=1$ for $0\leq m,k\leq n(x)$. Then $\ind_{G_0}^{G_x}\pi$
decomposes in the form $\ind_{G_0}^{G_x}V=\bigoplus_{j=0}^{n(x)-1}U_j$ where
\begin{equation}\nonumber
  U_j=\{u_j(v)=v+\xi_{n(x)}^jxS_xv+\ldots+
  \xi_{n(x)}^{jn(x)-1}x^{n(x)-1}S_x^{n(x)-1}v\,|\,v\in V\}.
\end{equation}

\subsection{$\ind_{G_0}^{G_1}$}\label{ind01}

\begin{theorem}\label{gogi}
  Let $\zeta_\pi:Z_0\rightarrow\C$ be the character of $Z_0$ defined by
  $\pi(z)=\zeta_\pi(z)I_V$ for $z\in Z_0$.
  The representation $\ind_{G_0}^{G_1}\pi$ decomposes in the same way as
  the representation $\ind_{Z_0}^{Z_1}\zeta_\pi$.
\end{theorem}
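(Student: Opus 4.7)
The plan is to realize $G_1$ as a central product of $Z_1$ and $G_0$ over $Z_0$, and then to identify $\ind_{G_0}^{G_1}\pi$ with a ``central tensor product'' $\ind_{Z_0}^{Z_1}\zeta_\pi\odot\pi$; the decomposition of the latter can then be read off directly from the decomposition of $\ind_{Z_0}^{Z_1}\zeta_\pi$.

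First I would establish the structural fact that $G_1=Z_1 G_0$, $Z_1\cap G_0=Z_0$, and that elements of $Z_1$ commute with elements of $G_0$; in particular $G_1/G_0\cong Z_1/Z_0$. The nontrivial inclusion comes straight from Definition \ref{defg1}: given $x\in G_1$, pick $h\in G_0$ with $x^{-1}gx=h^{-1}gh$ for all $g\in G_0$; then $xh^{-1}$ centralizes $G_0$, so $xh^{-1}\in Z_1$ and $x=(xh^{-1})\cdot h$. Thus every coset of $G_0$ in $G_1$ has a representative in $Z_1$, unique modulo $Z_0$, and I would fix such a system of representatives once and for all.

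Next I would define, for any representation $(\tau,U)$ of $Z_1$ with $\tau|_{Z_0}=\zeta_\pi\cdot\mathrm{id}_U$, a representation $\tau\odot\pi$ of $G_1$ on $U\otimes V$ by $(\tau\odot\pi)(zh)=\tau(z)\otimes\pi(h)$ for $z\in Z_1$, $h\in G_0$. This is well-defined on the central product because $Z_1$ and $G_0$ commute and the two factors agree on their intersection $Z_0$. Using the chosen coset representatives, I would then write down an explicit $G_1$-equivariant linear isomorphism
\begin{equation}\nonumber
  \ind_{G_0}^{G_1}V\;\cong\;\bigl(\ind_{Z_0}^{Z_1}\zeta_\pi\bigr)\odot\pi,
\end{equation}
sending $\sum_z zv_z$ to $\sum_z z\otimes v_z$, where $z$ runs over the chosen representatives of $Z_1/Z_0$. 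The $G_1$-equivariance reduces to two routine checks: for $h\in G_0$, both sides act by $\pi(h)$ on the $V$-factor (since $z$ centralizes $G_0$); for $z'\in Z_1$, both sides translate the $Z_1/Z_0$-cosets identically and pick up the same $\zeta_\pi$-scalar from the reduction $z'z=z''z_0$ with $z''$ a chosen representative and $z_0\in Z_0$.

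Once this identification is in place, decomposing $\ind_{Z_0}^{Z_1}\zeta_\pi=\bigoplus_i m_i\sigma_i$ into $Z_1$-irreducibles (each necessarily satisfying $\sigma_i|_{Z_0}=\zeta_\pi$) immediately gives $\ind_{G_0}^{G_1}\pi\cong\bigoplus_i m_i(\sigma_i\odot\pi)$. Each summand $\sigma_i\odot\pi$ is irreducible for $G_1$, since its restriction to $G_0$ is $(\dim\sigma_i)\cdot\pi$ and its restriction to $Z_1$ is $(\dim\pi)\cdot\sigma_i$; these restrictions also show that inequivalent $\sigma_i$ produce inequivalent $\sigma_i\odot\pi$, yielding the desired bijection with matching multiplicities. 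The main obstacle I expect is the bookkeeping around the central $Z_0$: one must verify that the tensor product representation descends cleanly from $Z_1\times G_0$ to its central-product quotient $G_1\cong(Z_1\times G_0)/\{(z_0,z_0^{-1}):z_0\in Z_0\}$, which is precisely where the hypothesis that $\zeta_\pi$ is the central character of $\pi$ is essential.
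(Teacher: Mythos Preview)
Your proof is correct and takes a genuinely different route from the paper's. The paper argues by transporting explicit data: it invokes Lemma~\ref{dec} to write each irreducible piece $U_j$ of $\ind_{Z_0}^{Z_1}\zeta_\pi$ in the form $\sum_x x(\lambda_x^1 c_1+\cdots+\lambda_x^m c_m)$, then defines $W_j\subset\ind_{G_0}^{G_1}V$ by the \emph{same} coefficients $\lambda_x^l$ (with $v_l\in V$ in place of $c_l\in\C$) and checks directly that each $W_j$ is $G_1$-stable, using that the chosen coset representatives lie in $Z_1$ and hence commute with $G_0$. Your approach instead makes the central-product structure $G_1\cong(Z_1\times G_0)/Z_0$ explicit, builds the functor $\tau\mapsto\tau\odot\pi$, and identifies $\ind_{G_0}^{G_1}\pi$ with $(\ind_{Z_0}^{Z_1}\zeta_\pi)\odot\pi$ in one stroke. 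Your route is cleaner and gives irreducibility and pairwise inequivalence of the pieces $\sigma_i\odot\pi$ for free via Schur, points the paper leaves implicit. On the other hand, the paper's coefficient-transport template is exactly what carries over to Theorem~\ref{gogii}, where $G_2$ is no longer a central product of $G_0$ with anything and one must work with the intertwiners $S_x$ and the cocycle $\beta$; the less conceptual argument here is deliberately a warm-up for that step. One small caveat: your phrase ``restriction to $Z_1$ is $(\dim\pi)\cdot\sigma_i$'' should be read as ``$\sigma_i$-isotypic,'' since $\dim\pi$ is typically infinite; the argument you have in mind (a $G_1$-submodule is of the form $U'\otimes V$ with $U'$ then forced to be $Z_1$-stable in $U_i$) goes through regardless.
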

\begin{proof}
Let us write \C\ for the vector space of $\zeta_\pi$. Then
\begin{equation}\nonumber
  \ind_{Z_0}^{Z_1}\C=\bigoplus_jU_j
\end{equation}
where $U_j$, by Lemma \ref{dec}, have the form
\begin{equation}\label{e40}
  U_j=\left\{u_j(c_1,\dots,c_m)=\sum_{x\in Z_1/Z_0}x
  (\lambda_x^1c_1+\ldots+\lambda_x^mc_m)\,|\,c_l\in\C\right\}
\end{equation}
for some $\lambda_x^l\in\C$. Now,
\begin{equation}\nonumber
  \ind_{G_0}^{G_1}V=\bigoplus_jW_j
\end{equation}
where
\begin{equation}\nonumber
  W_j=\left\{w_j(v_1,\dots,v_m)=\sum_{x\in Z_1/Z_0}
  x(\lambda_x^1v_1+\ldots+\lambda_x^mv_m)\,|\,v_l\in V\right\}.
\end{equation}
for the same $\lambda_x^l$ as in (\ref{e40}).
The action of $g\in G_0$ is given by
\begin{align*}
  g.w_j(v_1,\dots,v_m)=g.\left(\sum_{x\in Z_1/Z_0}
  x(\lambda_x^1v_1+\ldots+\lambda_x^mv_m)\right)=\\
  =\sum_{x\in Z_1/Z_0}x(\lambda_x^1g.v_1+\ldots+\lambda_x^mg.v_m)
  =w_j(g.v_1,\dots,g.v_m).
\end{align*}
Hence, $g.w_j(v_1,\ldots,v_m)\in W_j$.
The action of $x\in Z\setminus Z_0$ on $w_j(v_1,\dots,v_m)$ is the same as the
action of $x$ on $u_j(c_1,\dots,c_m)$.
\end{proof}

\noindent{\bfseries Example.} Let us assume that $Z_1=Q$, the quaternion
group and $Z_0=\Z_2=\{\pm1\}$. It is more convenient to denote
elements of $Q$ by $1$, $x$, $y$ and $xy$ instead of $i$, $j$ and $k$.
Hence $Q=\{\pm1,\pm x,\pm y,\pm xy\}$.
The structure of $G_0$ is not important.
There are two possible cases: $\pi_0|_{\{\pm1\}}$ is the trivial representation
and $\pi_1|_{\{\pm1\}}$ is a nontrivial representation.
The corresponding spaces will be denoted by $V_0$ and $V_1$ and restrictions
to $Z_0$ by $\C_0$ and $\C_1$.

Let us consider the first case. We know that
\begin{equation}\nonumber
  \ind_{Z_0}^{Z_1}\C_0=W_0\oplus W_1\oplus W_2\oplus W_3.
\end{equation}
It is easy to reconstruct spaces $W_j$:
\begin{gather}
  W_0=\left\{w_0(c)=c+xc+yc+xyc\,|\,c\in\C\right\}\nonumber\\
  W_1=\left\{w_1(c)=c+xc-yc-xyc\,|\,c\in\C\right\}\nonumber\\
  W_2=\left\{w_2(c)=c-xc+yc-xyc\,|\,c\in\C\right\}\nonumber\\
  W_3=\left\{w_3(c)=c-xc-yc+xyc\,|\,c\in\C\right\}.\nonumber
\end{gather}
The action of $\pm1$ is trivial and the action of other elements is easy to
reconstruct. For example,
\begin{equation}\nonumber
  y.w_1(c)=y.(c+xc-yc-xyc)=yc+(-xy)c-(-1)c-(-x)c=-w_1(c).
\end{equation}
Now,
\begin{equation}\nonumber
  \ind_{G_0}^{G_1}V_0=U_0\oplus U_1\oplus U_2\oplus U_3
\end{equation}
where
\begin{gather}
  U_0=\left\{u_0(v)=v+xv+yv+xyv\,|\,v\in V\right\}\nonumber\\
  U_1=\left\{u_1(v)=v+xv-yv-xyv\,|\,v\in V\right\}\nonumber\\
  U_2=\left\{u_2(v)=v-xv+yv-xyv\,|\,v\in V\right\}\nonumber\\
  U_3=\left\{u_3(v)=v-xv-yv+xyv\,|\,v\in V\right\}.\nonumber
\end{gather}
The action of $g\in G_0$ is given by
\begin{equation}\nonumber
  g.v_j(v)=v_j(g.v)
\end{equation}
and the action of other elements is the same as above. For example,
\begin{equation}\nonumber
  y.u_1(v)=y.(v+xv-yv-xyv)=yv+(-xy)v-(-1)v-(-x)v=-u_1(v).
\end{equation}

The nontrivial case is more interesting. We know that
\begin{equation}\nonumber
  \ind_{Z_0}^{Z_1}\C_1=W_0\oplus W_1
\end{equation}
and $W_0$ and $W_1$ are equivalent. Now,
\begin{gather}
  W_0=\left\{w_0(c,d)=c+ixc+yd-ixyd\,|\,c,d\in\C\right\}\nonumber\\
  W_1=\left\{w_1(c,d)=c-ixc+yd+ixyd\,|\,c,d\in\C\right\}.\nonumber
\end{gather}
The action of elements of $G$ on $W_j$ is given by
\begin{gather}
  (-1).w_0(c,d)=w_0(-c,-d),\qquad(-1).w_1(c,d)=w_1(-c,-d)\nonumber\\
  x.w_0(c,d)=w_0(-ic,id),\qquad x.w_1(c,d)=w_1(ic,-id)\nonumber\\
  y.w_0(c,d)=w_0(-d,c),\qquad y.w_1(c,d)=w_1(-d,c).\nonumber
\end{gather}
Then
\begin{equation}\nonumber
  \ind_{G_0}^{G_1}V_0=U_0\oplus U_1
\end{equation}
where
\begin{gather}
  U_0=\left\{u_0(v,t)=v+ixv+yt-ixyt\,|\,v,t\in V\right\}\label{e44}\\
  U_1=\left\{u_1(v,t)=v-ixv+yt+ixyt\,|\,v,t\in V\right\}\label{e48}
\end{gather}
and $U_0$ and $U_1$ are equivalent and the intertwining operator sends
$u_0(v,t)$ to $u_1(v,t)$.
The action of elements of $G$ on $W_j,\;(j=0,1)$ is given by
\begin{gather}
  g.u_j(v,t)=u_j(g.v,g.t)\nonumber\\
  x.u_j(v,t)=u_j(-i(-1)^kv,i(-1)^kt)\nonumber\\
  y.u_j(v,t)=u_j(-t,v).\nonumber
\end{gather}

\subsection{$\ind_{G_0}^{G_2}$}\label{ind02}

We prefer to induce from $G_0$ in order to avoid some technical difficulties.
This step is similar to previous one.
However, intertwining operators $S_x$ have to be used.

Let $n=|G_2/G_0|$, $\ds \xi_n=\cos\frac{2\pi}n+i\sin\frac{2\pi}n$ and
$\Z_n=\{\xi_n^j\,|\,j\in\{0,\dots,n-1\}\}$.
Proposition \ref{betaroot} shows that $\beta(x,y)=\xi_n^j$ for
$k\in\{0,\ldots,n-1\}$.
Let $B$ be the subgroup of $\Z_n$ generated by all $\beta(x,y)$.
Then $B$ has the form
\begin{equation}\nonumber
  B=\Z_m=\left\{\xi_m^j=\left(\cos\frac{2\pi}{m}+i\sin\frac{2\pi}{m}\right)\,|\,
  j\in\{0,\ldots,m-1\}\right\}
\end{equation}
for some $m$ which divides $n$.
It indicates that we should consider the group $F_2$ for which the
underlying set has the form $(G_2/G_0)\times B$ and the group operation is
given by
\begin{equation}\label{e52}
  (x,\alpha)(y,\beta)=(xy,\beta(x,y)\alpha\beta)
\end{equation}
It is easy to check the group operation is well defined. For example,
associativity for $F$ follows from associativity for $G$ (or $G_2$).
Associativity can be also proved formally using Proposition \ref{as}.
Let $F_0=F_2\cap G_0=\{e\}\times B=\{(e,\xi^k)\,|\,k\in\{0,\dots,m-1\}\}$.

We define the representation $\eta_m$ of $F_0$ on \C\ by
\begin{equation}\nonumber
  \eta_m((e,\xi^j)).z=\xi^jz.
\end{equation}

\begin{theorem}\label{gogii}
  The representation $\ind_{G_0}^{G_2}\pi$ decomposes in the same way as
  the representation $\ind_{F_0}^{F_2}\eta_m$.
\end{theorem}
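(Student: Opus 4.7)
The plan is to apply Lemma \ref{dec} to both sides and show that the resulting linear constraints on the parameters coincide. On the $G_2$-side the lemma gives
\begin{equation}\nonumber
  \ind_{G_0}^{G_2}V=\bigoplus_jW_j,\qquad W_j=\Bigl\{\sum_{x\in G_2/G_0}xS_x\bigl(\lambda_{x,j}^1v_1+\cdots+\lambda_{x,j}^{m_j}v_{m_j}\bigr):v_l\in V\Bigr\}.
\end{equation}
For the $F_2$-side one first checks, directly from (\ref{e52}), that $F_0=\{e\}\times B$ is central in $F_2$; hence $\eta_m^{(x,1)}=\eta_m$, the intertwining operators on $\C$ may be taken to be the identity, and Lemma \ref{dec} yields
\begin{equation}\nonumber
  \ind_{F_0}^{F_2}\C=\bigoplus_jU_j,\qquad U_j=\Bigl\{\sum_{x\in F_2/F_0}(x,1)\bigl(\mu_{x,j}^1c_1+\cdots+\mu_{x,j}^{m_j}c_{m_j}\bigr):c_l\in\C\Bigr\}.
\end{equation}
The goal is to prove that, after a suitable matching of indices $j$, the admissible families $\{\lambda_{x,j}^l\}$ and $\{\mu_{x,j}^l\}$ are cut out by the same linear equations.

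The main computation is the action of $y\in G_2\setminus G_0$ on the $G_2$-side. Writing $yx=z(y,x)n(y,x)$ with $n(y,x)\in G_0$ and setting $u=z(y,x)$, one uses $S_u=\pi(n(y,x))S_{yx}$ together with the companion of (\ref{d16}), namely $S_xS_y=\beta(y,x)S_{yx}$ obtained by swapping the roles of $x$ and $y$, to obtain
\begin{equation}\nonumber
  y.\Bigl(\sum_{x\in G_2/G_0}xS_xv_x\Bigr)=\sum_{u\in G_2/G_0}uS_u\,\beta(y,y^{-1}u)\,S_y^{-1}v_{y^{-1}u}.
\end{equation}
The parallel computation on the $F_2$-side uses $(y,1)(x,1)=(yx,\beta(y,x))=(yx,1)(e,\beta(y,x))$ from (\ref{e52}) together with the definition of $\eta_m$, and produces
\begin{equation}\nonumber
  (y,1).\Bigl(\sum_{x\in F_2/F_0}(x,1)c_x\Bigr)=\sum_{u\in F_2/F_0}(u,1)\,\beta(y,y^{-1}u)\,c_{y^{-1}u}.
\end{equation}
The cocycle factor is identical; the only discrepancy is the position-independent operator $S_y^{-1}$ appearing on the $G_2$-side.

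Imposing $y$-invariance of $W_j$ means that $y.w_j(v_1,\dots,v_{m_j})$ must equal $w_j(v_1',\dots,v_{m_j}')$ for suitable $v_l'\in V$. Since $S_y^{-1}$ is invertible and independent of $u$, setting $v_l'=\sum_k\nu^l_kS_y^{-1}v_k$ reduces the condition to the scalar equations
\begin{equation}\nonumber
  \beta(y,y^{-1}u)\,\lambda_{y^{-1}u,j}^l=\sum_k\lambda_{u,j}^k\nu^l_k,\qquad u\in G_2/G_0,\ l\in\{1,\dots,m_j\}.
\end{equation}
The analogous calculation on the $F_2$-side, with $c_l$ in place of $v_l$ and no $S_y^{-1}$ to absorb, produces the identical set of equations in $\mu_{x,j}^l$. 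Letting $y$ range over coset representatives of $G_2/G_0$ exhausts the invariance conditions, so the admissible families $\{\lambda_{x,j}^l\}$ and $\{\mu_{x,j}^l\}$ satisfy exactly the same linear relations; hence the pieces $W_j$ and $U_j$ correspond one-to-one, and the two induced modules decompose in the same way.

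The main obstacle is bookkeeping: one must keep the two appearances of $\beta$ (the one built into (\ref{d16}) where $S_yS_x=\beta(x,y)S_{xy}$, and the one inserted by hand into the product law (\ref{e52})) consistent, so that the cocycles produced in the two action formulas match literally and not only up to a swap of arguments. Beyond that one has to justify that absorbing $S_y^{-1}$ into a linear change of variables in $V^{m_j}$ does not alter the admissible data of Lemma \ref{dec}, which is immediate because the change of variables is invertible.
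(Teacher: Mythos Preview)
Your proof is correct and follows essentially the same approach as the paper. The paper's argument is terser: it starts from the decomposition $\ind_{F_0}^{F_2}\C=\bigoplus_jU_j$, defines $W_j$ using the \emph{same} coefficients $\lambda_x^l$, and then verifies directly that each $W_j$ is $G_2$-invariant by computing $g.w_j(v_1,\dots,v_m)=w_j(g.v_1,\dots,g.v_m)$ for $g\in G_0$ and observing that the action of $y\in G_2\setminus G_0$ on $w_j(v_1,\dots,v_m)$ coincides with the action of $(y,1)$ on $u_j(c_1,\dots,c_m)$ up to the insertion of $S_y^{-1}$ in the arguments. Your version makes this last observation explicit by deriving the two action formulas and showing the cocycle factors $\beta(y,y^{-1}u)$ agree literally, then phrasing the conclusion as ``the invariance constraints on the parameter families $\{\lambda_x^l\}$ and $\{\mu_x^l\}$ are identical.'' This is the same content, just unpacked more carefully; in particular your handling of $yx=z(y,x)n(y,x)$ and the identity $S_u=\pi(n(y,x))S_{yx}$ fills in a step the paper leaves implicit.
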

\begin{proof}
The proof is similar to the proof of Theorem \ref{gogi}.
However, we have to emphasize that this time, we have an abstract group
$F_2$ instead of subgroup $Z_1$. Actually, we could use an abstract group
$F_1$ in Theorem \ref{gogi} but the subgroup $Z_1$ is more natural choice.
Let
\begin{equation}\nonumber
  \ind_{F_0}^{F_2}\C=\bigoplus_jU_j
\end{equation}
where $U_j$ have the form
\begin{equation}\label{e56}
  U_j=\left\{u_j(c_1,\dots,c_m)=\sum_{x\in F_2/F_0}
  x(\lambda_x^1c_1+\ldots+\lambda_x^mc_m)\,|\,c_j\in\C\right\}.
\end{equation}
Now,
\begin{equation}\nonumber
  \ind_{G_0}^{G_2}V=\bigoplus_jW_j
\end{equation}
where
\begin{equation}\nonumber
  W_j=\left\{w_j(v_1,\dots,v_m)=\sum_{x\in F_2/F_0}
  xS_x(\lambda_x^1v_1+\ldots+\lambda_x^mv_m)\,|\,v_l\in V\right\}
\end{equation}
for the same $\lambda_x^l$ as in (\ref{e56}).
The action of $g\in G_0$ is given by
\begin{gather*}
  g.w_j(v_1,\dots,v_m)=g.\left(
  \sum_{x\in F_2/F_0}xS_x(\lambda_x^1v_1+\ldots+\lambda_x^mv_m)\right)
  \stackrel{(\ref{d4})}{=}\\ \stackrel{(\ref{d4})}{=}\sum_{x\in F_2/F_0}
  xS_x(\lambda_x^1g.v_1+\ldots+\lambda_x^mg.v_m)=w_j(g.v_1,\dots,g.v_m).
\end{gather*}
Hence, $g.w_j(v_1,\ldots,v_m)\in W_j$.
The action of $y\in Z\setminus Z_0$ on $w_j(v_1,\dots,v_m)$ is given by
\begin{gather*}
  y.w_j(v_1,\dots,v_m)=y.\left(\sum_{y\in F_2/F_0}
  xS_x(\lambda_x^1v_1+\ldots+\lambda_x^mv_m)\right)=\\
  =\sum_{x\in F_2/F_0}
  yxS_xS_yS_y^{-1}(\lambda_x^1v_1+\ldots+\lambda_x^mv_m)=\\
  =\sum_{x\in F_2/F_0}
  yxS_xS_y(\lambda_x^1S_y^{-1}v_1,\ldots,\lambda_x^mS_y^{-1}v_m).
\end{gather*}
It shows that the action of $y\in Z\setminus Z_0$ on $w_j(v_1,\dots,v_m)$ is
the same as the action of $y$ on $u_j(c_1,\dots,c_m)$ (up to adding $S_y^{-1}$).
Coefficients $\beta$ appear in the action of $y$ on $w\in\ind_{G_0}^{G_2}\pi$
and in the action of $y$ on $u\in\ind_{F_0}^{F_2}\eta_m$.
\end{proof}

\noindent{\bfseries Example.} We will consider the example which is very
similar to the previous one.
Let $G_0=(SL(2,\R)\times SL(2,\R))/\{I\times I,(-I)\times(-I)\}$.
This time, the action will be written in terms of $(\g,K)$-modules.

For the beginning, let us consider only one one $SL(2,\R)$.
We use the terminology and results from \cite{vo}. The complexified Lie algebra
$\g=\mathfrak{sl}(2,\C)$ is spanned by
\begin{equation}\nonumber
  H=-i\left[\begin{array}{cc}0&1\\-1&0\end{array}\right],\quad
  X=\frac12\left(\left[\begin{array}{cc}1&0\\0&-1\end{array}\right]+
  i\left[\begin{array}{cc}0&1\\1&0\end{array}\right]\right),
\end{equation}
\begin{equation}\nonumber
  Y=\frac12\left(\left[\begin{array}{cc}1&0\\0&-1\end{array}\right]-
  i\left[\begin{array}{cc}0&1\\1&0\end{array}\right]\right).
\end{equation}
It acts on the vector space $V$ spanned by vectors $v_j$ for all $j\in\Z$.
The action of $\mathfrak{sl}(2,\C)$ on $V$ is given by
\begin{gather}
  H.v_j=jv_j\nonumber\\
  X.v_j=\frac12\left(\lambda+(j+1)\right)v_{j+2}\nonumber\\
  Y.v_j=\frac12\left(\lambda-(j-1)\right)v_{j-2}.\nonumber
\end{gather}
for some $\lambda\in\C$. Our $V$ is irreducible if and only if
$\lambda+j+1\neq0,\;\forall j$. Now, we introduce the element $x$ of the form
\begin{eqnarray}\label{e60}
  x=\left[\begin{array}{cc}1&0\\0&-1\end{array}\right]
\end{eqnarray}
and it is easy to check that
\begin{gather}
  Ad(x)H=-H\nonumber\\
  Ad(x)X=Y\nonumber\\
  Ad(x)Y=X.\nonumber
\end{gather}
Also, it is easy to check that $\pi^x$ is isomorphic to $\pi$ and the intertwining
operator $S_x$ is given by
\begin{equation}\label{e64}
  S_xv_j=v_{-j}.
\end{equation}
The $K$-type $V_k$ is spanned by vector $v_k$.
It shows that $S_xV_j=V_{-j}$. Hence $\ind_{G_0}^G$
should act on $V_j\oplus V_{-j}$. It is easy to show that
\begin{equation}\nonumber
  \ind_{G_0}^G(V_j\oplus V_{-j})=Z_j^0\oplus Z_j^1
\end{equation}
for $j>0$ and
\begin{equation}\nonumber
  \ind_{G_0}^GV_0=Z_0^0\oplus Z_0^1
\end{equation}
where
\begin{gather}
  Z_j^0=\mathrm{span}(v_j+xS_xv_j,v_{-j}+xS_xv_{-j})=
    \mathrm{span}(v_j+xv_{-j},v_{-j}+xv_j)\nonumber\\
  Z_j^1=\mathrm{span}(v_j-xS_xv_j,v_{-j}-xS_xv_{-j})=
    \mathrm{span}(v_j-xv_{-j},v_{-j}-xv_j)\nonumber\\
  Z_0^0=\mathrm{span}(v_0+xS_xv_0)=\mathrm{span}(v_0+xv_0)\nonumber\\
  Z_0^1=\mathrm{span}(v_0-xS_xv_0)=\mathrm{span}(v_0-xv_0).\nonumber
\end{gather}
Now, $(\g,K)$-module $\ind_{G_0}^GV$ is reducible and
\begin{equation}\nonumber
  \ind_{G_0}^GV=U_0+U_1.
\end{equation}
If $K$-types of $V$ are odd, then
\begin{equation}\nonumber
  U_0|_K=\bigoplus_{j\;\mathrm{odd}}Z_j^0\qquad\mathrm{and}\qquad
  U_1|_K=\bigoplus_{j\;\mathrm{odd}}Z_j^1.
\end{equation}
If $K$-types of $G_0$ are even, then
\begin{equation}\nonumber
  U_0|_K=Z_0^0\oplus\bigoplus_{j\;\mathrm{even},\;j>0}U_j^0
  \qquad\mathrm{and}\qquad
  U_1|_K=Z_0^1\oplus\bigoplus_{j\;\mathrm{even},\;j>0}U_j^1.
\end{equation}
For example,
\begin{gather}
  x.(w_j+xw_{-j})=w_{-j}+xw_j\qquad x.(w_{-j}+xw_j)=w_j+xw_{-j}\nonumber\\
  x.(w_j-xw_{-j})=-(w_{-j}-xw_j)\qquad x.(w_{-j}-xw_j)=-(w_j+xw_{-j})\nonumber\\
  X.(w_{-2}+xw_2)=\frac12(\lambda-1)(w_0+xw_0)\nonumber\\
  X.(w_{-2}-xw_2)=\frac12(\lambda-1)(w_0-xw_0)\nonumber\\
  Y.(w_2+xw_{-2})=\frac12(\lambda-1)(w_0+xw_0)\nonumber\\
  X.(w_j+xw_{-j})=\frac12(\lambda+j+1)(w_{j+2}+xw_{-j-2}).\nonumber
\end{gather}

Now, let us consider our group $G_0$ acting on the space
\begin{equation*}
  \mathrm{span}\{v_j^1\otimes v_l^2\,|\,j,l\in\Z\}.
\end{equation*}
The group $G$ has 4 connected components.
We add elements similar to (\ref{e60}) (by abuse of notation we use $x$ again):
$x=i\cdot\mathrm{diag}(1,-1,1,1)$, $y=j\cdot\mathrm{diag}(1,1,1,-1)$ and
$z=k\cdot\mathrm{diag}(1,-1,1,-1)$. Multiplication by $i$, $j$ and $k$ does not
mean multiplication by all entries. We simply want to obtain $xy=-yx$.
Thanks to (\ref{e64}), it is easy to find intertwining operators:
\begin{equation}\nonumber
  S_xv_j^1\otimes v_l^2=v_{-j}^1\otimes v_l^2,\quad
  S_yv_j^1\otimes v_l^2=v_j^1\otimes v_{-l}^2,\quad
  S_zv_j^1\otimes v_l^2=v_{-j}^1\otimes v_{-l}^2.
\end{equation}
Actually, we should multiply operators $S$ by $i$ in order to satisfy
(\ref{d12}). However, Remark after Proposition \ref{betaroot} shows that
we can omit multiplication by $i$.
The next step is to find coefficients $\beta(x,y)$.
Since intertwining operators $S_x$ commute, $\beta(x,y)=1$, $\beta(y,z)=1$,
$\beta(z,x)=1$, $\beta(y,x)=-1$, $\beta(z,y)=-1$ and $\beta(x,z)=-1$.
It shows that $B=\{\pm1\}$ and $F_2=\Z_2\times\Z_2\times B$ as a set and the
group operation (\ref{e52}) shows that $F_2\cong Q$, where $Q$ is the group
of quaternions. Also, $F_0=\Z_2$.
We are in situation of the previous Example. Using (\ref{e44}) and (\ref{e48}), one
can easily get
\begin{gather}
  U_0=\left\{u_0(v,t)=v+ixv+yt-ixyt\,|\,v=v^1\otimes v^2,t=t^1\otimes t^2
    \in V\right\}\nonumber\\
  U_1=\left\{u_1(v,t)=v-ixv+yt+ixyt\,|\,v=v^1\otimes v^2,t=t^1\otimes t^2
    \in V\right\}\nonumber.
\end{gather}

\subsection{$\ind_{G_2}^G$}\label{ind03}

The following result is known. For example, see \cite{du}. However, we
put the theorem in order to complete the induction procedure. Also, our
proof follows ideas developed in this paper.

\begin{theorem}
The representation $\ind_{G_2}^G\rho$ is irreducible for the irreducible
representation $(\rho,W)$ of $G_2$.
\end{theorem}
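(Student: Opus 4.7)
The plan is to argue by contradiction, mirroring the projection-and-intertwining analysis that drives the earlier proofs of this paper. Assume $W'\subset\ind_{G_2}^G\rho$ is a proper nonzero $G$-invariant subspace; the goal is to force $W'=\ind_{G_2}^G\rho$. Write elements of the induced space as $\sum_{y\in G/G_2}yv_y$ with $v_y\in W$, and let $p_y$ denote the projection onto the $y$th coordinate.

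The first step examines $p_e$. For any $g\in G_2$ one checks directly that $p_e(g\cdot w)=\rho(g)p_e(w)$, so $p_e(W')\subset W$ is $\rho(G_2)$-invariant and hence $0$ or $W$ by the irreducibility of $\rho$. Translating by a coset representative $y_0$ gives $p_e(y_0^{-1}\cdot w)=v_{y_0}(w)$, and since $y_0^{-1}W'\subset W'$ the case $p_e(W')=0$ would force $p_y(W')=0$ for every $y$, hence $W'=0$. So $p_e(W')=W$ and, by the same translation argument, $p_y(W')=W$ for every $y\in G/G_2$.

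The second step identifies $\rho|_{G_0}$. Since $\rho$ is irreducible on $G_2$ and $G_0\triangleleft G_2$, Clifford's theorem gives $\rho|_{G_0}$ as a direct sum of $G_2$-conjugates of some irreducible $G_0$-component $\pi$; but $G_2=\{x\in G:\pi^x\cong\pi\}$ stabilizes $\pi$, so the orbit is a point and $\rho|_{G_0}=m\pi$ for some multiplicity $m$. Consequently $\rho^y|_{G_0}=m\pi^y$, and $\pi^y\not\cong\pi^{y'}$ for distinct $y,y'\in G/G_2$ forces $\rho^y\not\cong\rho^{y'}$ as $G_2$-representations. Thus $\{\rho^y\}_{y\in G/G_2}$ is a family of pairwise non-isomorphic irreducibles.

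Assuming $G_2$ is normal in $G$, each $p_y$ is equivariant for the $\rho^y$-action of $G_2$, so assembling the projections yields a $G_2$-module isomorphism $\ind_{G_2}^G\rho\cong\bigoplus_{y\in G/G_2}(W,\rho^y)$ onto a multiplicity-free direct sum of distinct irreducibles. The image of $W'$ is a $G_2$-submodule surjecting onto every summand by the first step, and any submodule of a multiplicity-free sum with this property must be the whole sum; this gives $W'=\ind_{G_2}^G\rho$, contradicting properness. I expect the main obstacle to be the Clifford step, which requires verifying that $\rho|_{G_0}$ really is a multiple of the specific $\pi$ defining $G_2$ (so that the non-isomorphism $\pi^y\not\cong\pi$ transports to $\rho^y\not\cong\rho$). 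A secondary technicality arises when $G_2$ fails to be normal in $G$: the projections $p_y$ are then not $G_2$-equivariant and the direct decomposition must be replaced by Mackey's double-coset formula, but the same $G_0$-restriction comparison on $G_0\subset G_2\cap yG_2y^{-1}$ still rules out every non-trivial intertwiner and yields the conclusion.
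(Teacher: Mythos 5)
Your argument is correct in substance but takes a genuinely different route from the paper's. The paper works entirely by hand: it assumes a linear dependence $w_p=\sum_y A_y w_y$ among the $G_2$-coordinates of an element of an irreducible constituent, rewrites everything via Lemma~\ref{dec} in terms of the operators $S_x$, and extracts the relation $\pi^{q^{-1}p}(g)A_x=A_x\pi(g)$, which is impossible because $q^{-1}p\notin G_2$ means $\pi^{q^{-1}p}\not\cong\pi$. You package the same key fact --- that restriction to $G_0$ already detects the disjointness of $\rho$ and $\rho^y$ for $y\notin G_2$ --- into standard machinery: Clifford theory to get $\rho|_{G_0}=m\pi$, then either multiplicity-freeness of $\left.\ind_{G_2}^G\rho\right|_{G_2}$ (normal case) or Mackey's irreducibility criterion (general case). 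Your version is shorter and more conceptual; the paper's is self-contained, avoids citing Clifford/Mackey, and stays inside the $S_x$, $\lambda_x$ formalism it has already built.

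Two caveats, both of which you partly anticipate. First, the paper explicitly remarks that $G_2$ need not be normal in $G$, so what you call a ``secondary technicality'' is in fact the generic case: the projection/multiplicity-free argument is only a special case, and the real proof is the Mackey computation $\mathrm{Hom}_{G_2\cap yG_2y^{-1}}(\rho,\rho^y)\subseteq\mathrm{Hom}_{G_0}(m\pi,m\pi^y)=0$, which you assert but do not carry out. It does go through, precisely because $G_0$ is normal in $G$ and therefore sits inside every intersection $G_2\cap yG_2y^{-1}$, but it should be written out rather than waved at. Second, your Clifford step silently identifies the irreducible constituent of $\rho|_{G_0}$ with the $\pi$ used to define $G_2$. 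This identification is essential (for $\rho$ trivial and $G_2\neq G$ the induced representation is visibly reducible), and it holds exactly when $\rho$ is a constituent of $\ind_{G_0}^{G_2}\pi$ --- the hypothesis the paper's own proof adopts in its opening sentence, even though the theorem as stated omits it. So your proof is valid in the same generality as the paper's, no more.
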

\begin{proof}
Let $(\pi,V)$ be the irreducible representation of $G_0$ and
let $(\rho,W)$ be one irreducible component of $(\ind_{G_0}^{G_2}\pi,
\ind_{G_0}^{G_2}V)$. Now, let us consider $\ind_{G_2}^GW$ and let $U$ be
one irreducible component. We want to show that $U=\ind_{G_2}^GW$.
Let us mention that $G_2$ do not have to be a normal subgroup of $G$.
By (\ref{e8}) elements of $U$ can be written in the form
\begin{equation}\nonumber
  u=\sum_{y\in G/G_2}yw_y
\end{equation}
where $w_y\in W$.
If there is no relations among $w_y$'s, then $U=\ind_{G_2}^GW$.
Hence, we have to assume that at least one $w$, say $w_p$ is a function of
other $w$'s. Since $U$ is a linear space, we can assume that
\begin{equation}\label{e68}
  w_p=\sum_{y\in(G/G_2)\setminus\{pG_2\}}A_yw_y
\end{equation}
for some linear operators $A_y$ and write
\begin{equation}\label{e72}
  u=\sum_{y\in (G/G_2)\setminus\{pG_2\}}yw_y+
  p\sum_{y\in(G/G_2)\setminus\{pG_2\}}A_yw_y.
\end{equation}
Lemma \ref{dec} shows that (\ref{e72}) can be written in the form
\begin{gather}
  u=\sum_{y\in (G/G_2)\setminus\{pG_2\}}y\sum_{x\in G_2/G_0}xS_x
  \left(\lambda_{y,x}^1 v_{y,x}^1+\ldots+
  \lambda_{y,x}^{m(y,x)}v_{y,x}^{m(y,x)}\right)+\nonumber\\
  +p\sum_{x\in G_2/G_0}xS_x\sum_j A_x^jv_x^j\label{e76}
\end{gather}
where $v_x^j\in\{v_{y,x}^l\}$ and $A_x^j\in GL(V)$.
Now, let us choose one $v_{q,x}^l$ to be an arbitrary element of $V$, denote
it by $v$ and set all other $v_{y,x}^l=0$. Then (\ref{e76}) transforms to
\begin{equation}\label{e80}
  u=u(v)=q\sum_{x\in G_2/G_0}xS_x\lambda_xv+p\sum_{x\in G_2/G_0}xS_xA_xv.
\end{equation}
Elements $u(v)$ form a $G_0$-invariant linear subspace
of $U$. The action of $g\in G_0$ on (\ref{e80}) produces
\begin{equation}\label{e84}
  \pi(g)u(v)=q\sum_{x\in G_2/G_0}xS_x\lambda_x\pi^q(g)v+
  p\sum_{x\in G_2/G_0}xS_x\pi^p(g)A_xv.
\end{equation}
We can write $\pi^q(g)v$ instead of $v$ in (\ref{e80}) and get
\begin{equation}\label{e88}
  u(\pi^q(g))=q\sum_{x\in G_2/G_0}xS_x\lambda_x\pi^q(g)v+
  p\sum_{x\in G_2/G_0}xS_xA_x\pi^q(g)v.
\end{equation}
One can compare (\ref{e84}) and (\ref{e88}) and obtain
\begin{equation}\nonumber
  \pi^p(g)A_xv=A_x\pi^q(g)v
\end{equation}
or
\begin{equation}\nonumber
  \pi^{q^{-1}p}(g)A_xv=A_x\pi(g)v.
\end{equation}
It gives a contradiction since $q^{-1}p\notin G_2$. Hence, the assumption
(\ref{e68}) is not valid. It shows that $U=\ind_{G_2}^GW$.
\end{proof}

\section*{Acknowledgement}

This work was supported by the QuantiXLie Centre of Excellence, a project co financed by the Croatian Government and European Union through the European Regional Development Fund - the Competitiveness and Cohesion Operational Programme (Grant KK.01.1.1.01.0004).


\begin{thebibliography}{Mac58}
	
\bibitem[Bal97]{ba} Baldoni, W. M.,
	\emph{General representation theory of real reductive {L}ie groups},
	``Representation theory and automorphic forms (Edinburgh, 1996)'',
	American Mathematica Society, 1997,
	
\bibitem[Bou89]{bo} Bourbaki, N.,
	``Lie groups and Lie algebras. Chapters 1--3'',
	Springer-Verlag, 1989.
	
\bibitem[Bro82]{br} Brown, K.,
	``Cohomology of groups'',
	Springer-Verlag, 1982.
	
\bibitem[Duf11]{du} Duflo, M.,
	\emph{Construction de representations unitaires d'un groupe de Lie},
	``Harmonic analysis and group representations'',
	Springer-Verlag, 2011, 130--220.
	
\bibitem[FH91]{fh} Fulton, W. and J. Harris,
	``Representation theory'',
	Springer-Verlag, 1991.
	
\bibitem[Gar95]{ga} Garnica-Vigil, E.,
	\emph{On the decomposition of Langlands subrepresentations for a group in
		the Harish-Chandra class},
	Transactions of the American Mathematical Society {\bf347} (1995), 1609--1648.
	
\bibitem[Kna96]{kn} Knapp, A. W.,
	``Lie groups beyond an introduction'',
	Birkh\"auser, 1996.
	
\bibitem[Mac58]{ma} Mackey, G. W.,
	\emph{Unitary representations of group extensions. I},
	Acta Mathematica {\bf99} (1958), 265--311.
	
\bibitem[Sch75]{sc} Schmidt, W.,
	\emph{On the characters of the discrete series. The Hermitian symmetric case},
	Inventiones Mathematicae {\bf30} (1975), 47--144.
	
\bibitem[SV80]{sv} Birgit S. and D. A. Vogan,
	\emph{Reducibility of generalized principal series representations},
	Acta Mathematica {\bf145} (1980), 227--299.
	
\bibitem[Vog81]{vo} Vogan D. A.,
	``Representations of real reductive {L}ie groups'',
	Birkh\"auser, 1981.
	
\end{thebibliography}
\end{document}